\newtheorem{theorem}{Theorem}
\newtheorem{lemma}{Lemma} 
\numberwithin{equation}{section}
\newcolumntype{C}[1]{>{\centering\arraybackslash$}m{#1}<{$}}
\newcolumntype{R}[1]{>{\raggedleft\arraybackslash$}m{#1}<{$}}
\newcolumntype{L}[1]{>{\raggedright\arraybackslash$}m{#1}<{$}}
\begin{document}

\title{Sweet division problems: from chocolate bars to honeycomb strips and back}

\author{Tomislav Do\v{s}li\'c \\
    Faculty of Civil Engineering, University of Zagreb\\
    Croatia, Zagreb 10000\\
  \text{tomislav.doslic@grad.unizg.hr}\\
 \and
Luka Podrug\\
    Faculty of Civil Engineering, University of Zagreb\\
    Croatia, Zagreb 10000\\
  \text{luka.podrug@grad.unizg.hr}}
\date{}
\maketitle

\pagestyle{plain}

\begin{abstract} \noindent
%In this paper we consider both rectangular and hexagonal strip and investigate the number of possible divisions of that strips. We present two approaches to this problem, one involving recurring relations and the other transfer matrices. Furthermore, we prove quite simple formula for the number of divisions of hexagonal strip with exactly $k$ parts and for the total number of divisions we give one more combinatorial interpretation of famous Fibonacci numbers.  
We consider two division problems on narrow strips of square and hexagonal
lattices. In both cases we compute the bivariate enumerating sequences and
the corresponding generating functions, which allowed us to determine 
the asymptotic behavior of the total number of such subdivisions and the 
expected number of parts. For the square lattice we extend results of two
recent references by establishing polynomiality of enumerating sequences
forming columns and diagonals of the triangular enumerating sequence. 
In the hexagonal case, we find a number of new combinatorial interpretations
of the Fibonacci numbers and find combinatorial proofs of some Fibonacci
related identities. We also show how both cases could be treated via the
transfer matrix method and discuss some directions for future research.
\end{abstract}

\noindent

\section{Introduction}
\noindent
It has been a long standing problem of great practical importance to count the
ways of dividing a collection of entities into smaller sets according to a
given set of rules. If the entities are considered to be indivisible, and
we only care about their number, the natural framework for modeling such
situations is the theory of integer partitions and compositions, depending
on further properties of the considered entities. If, on the other hand,
we are interested in relationships between the entities, such as, e.g.,
their adjacency patterns or their relative positions, we must resort to
more complex models such as graphs and geometric figures.

In this paper we look at finite portions of the square and hexagonal
regular lattices, and count ways of dividing narrow strips in such lattices
into a given number of pieces while preserving integrity of individual
squares or hexagons. The considered portions of square and hexagonal lattices
remind us on chocolate bars and honeycomb slabs, respectively, hence the title.
We start by revisiting some partial results for narrow
strips in the square lattice available in the literature and present complete
solution to the problem. In particular, we derive the recurrences satisfied
by the sequences enumerating the divisions of a $2 \times n$ strip into
$k$ pieces. From them, we compute the bivariate generating function whose
univariate specialization yields the recurrence for the overall number of
divisions. In that way we recover the results of Knopfmacher obtained in
the context of compositions of ladder graphs \cite{knopf}. We refine those
results by investigating behavior of columns in the enumerating triangle.
We establish
convolution-type recurrences for all columns, going thus beyond partial
results of references \cite{brown,durham,wagon}. Then we apply the same approach
to narrow strips of hexagons, again deriving the recurrences and computing
the bivariate generating function. Along the way we find a new combinatorial
representation of odd-indexed Fibonacci numbers and provide a new combinatorial proof for one well-known identity for Fibonacci numbers. Then we show how the results for
honeycomb strips can be obtained by using transfer matrices. Finally, we also
derive transfer matrices for the chocolate bars we started from.

The paper is concluded by some remarks on the strong and weak points of
employed methods and with some indications of possible further directions.

\section{Definitions and preliminary results}

Let $n$ be a non-negative integer. We consider a $2 \times n$ rectangular
strip consisting of $2n$ squares arranged in 2 rows and $n$ columns such as the one shown in Fig. \ref{fig:rec_grid n}. In the rest of the paper we will
often refer to such strips as to chocolate bars of length $n$. We consider 
divisions of such structures into a given number of pieces obtained by
cutting along the edges of basic squares. More precisely, we would like to
find the number of all possible divisions of such a bar of a given length, 
and also the number of such divisions into a given number of parts $k$.
Clearly, $1 \leq k \leq 2n$ are the only meaningful values of $k$. Let $r_k(n)$
denote the number of divisions of $2\times n$ rectangular strip into exactly
$k$ pieces and $r(n)$ the total number of divisions. From definition we have
that $r_k(n)=0$ for $ k < 0$ and for $k>2n$. The initial values are
$r_1(1)=r_2(1)=1$, and $r_1(2)=1$, $r_2(2)=6$, $r_3(2)=4$ and $r_1(4)=1$.

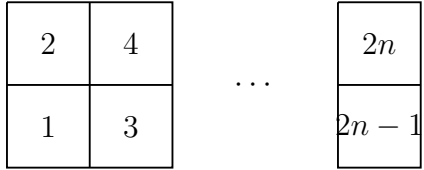
\begin{figure}[ht]
\centering

\begin{tikzpicture}[scale=0.55]
\coordinate (M1) at (0,2);
\coordinate (M2) at (2,2);
\coordinate (M3) at (4,2);   
\coordinate (M4) at (6,2);
\coordinate (M5) at (8,2);
\coordinate (M6) at (10,2); 

\coordinate (N1) at (0,0);
\coordinate (N2) at (2,0);
\coordinate (N3) at (4,0);   
\coordinate (N4) at (6,0);
\coordinate (N5) at (8,0);
\coordinate (N6) at (10,0); 

\coordinate (P1) at (0,-2);
\coordinate (P2) at (2,-2);
\coordinate (P3) at (4,-2);   
\coordinate (P4) at (6,-2);
\coordinate (P5) at (8,-2);
\coordinate (P6) at (10,-2);

\coordinate (A) at ({sqrt(3)},1);
\coordinate (B) at ({sqrt(3)/2},0.5);
\coordinate (C) at ({sqrt(3)/2},-0.5);   
\coordinate (D) at ({sqrt(3)},-1);
\coordinate (E) at ({3*sqrt(3)/2},-0.5);
\coordinate (F) at ({3*sqrt(3)/2},0.5); 

\coordinate (A1) at ({2*sqrt(3)},1);
\coordinate (B1) at ({3*sqrt(3)/2},0.5);
\coordinate (C1) at ({3*sqrt(3)/2},-0.5);   
\coordinate (D1) at ({2*sqrt(3)},-1);
\coordinate (E1) at ({5*sqrt(3)/2},-0.5);
\coordinate (F1) at ({5*sqrt(3)/2},0.5); 

\coordinate (A2) at ({3*sqrt(3)},1);
\coordinate (B2) at ({5*sqrt(3)/2},0.5);
\coordinate (C2) at ({5*sqrt(3)/2},-0.5);   
\coordinate (D2) at ({3*sqrt(3)},-1);
\coordinate (E2) at ({7*sqrt(3)/2},-0.5);
\coordinate (F2) at ({7*sqrt(3)/2},0.5);

\coordinate (A3) at ({4*sqrt(3)},1);
\coordinate (B3) at ({7*sqrt(3)/2},0.5);
\coordinate (C3) at ({7*sqrt(3)/2},-0.5);   
\coordinate (D3) at ({4*sqrt(3)},-1);
\coordinate (E3) at ({9*sqrt(3)/2},-0.5);4
\coordinate (F3) at ({9*sqrt(3)/2},0.5);

\coordinate (A4) at ({5*sqrt(3)},1);
\coordinate (B4) at ({9*sqrt(3)/2},0.5);
\coordinate (C4) at ({9*sqrt(3)/2},-0.5);   
\coordinate (D4) at ({5*sqrt(3)},-1);
\coordinate (E4) at ({11*sqrt(3)/2},-0.5);4
\coordinate (F4) at ({11*sqrt(3)/2},0.5);

\coordinate (A5) at ({6*sqrt(3)},1);
\coordinate (B5) at ({11*sqrt(3)/2},0.5);
\coordinate (C5) at ({11*sqrt(3)/2},-0.5);   
\coordinate (D5) at ({6*sqrt(3)},-1);
\coordinate (E5) at ({13*sqrt(3)/2},-0.5);4
\coordinate (F5) at ({13*sqrt(3)/2},0.5);

\coordinate (G) at ({sqrt(3)/2},-0.5);
\coordinate (H) at ({0},-1);
\coordinate (I) at ({0},-2);   
\coordinate (J) at ({sqrt(3)/2},-2.5);
\coordinate (K) at ({sqrt(3)},-2);
\coordinate (L) at ({sqrt(3)},-1);

\coordinate (G1) at ({3*sqrt(3)/2},-0.5);
\coordinate (H1) at ({{sqrt(3)}},-1);
\coordinate (I1) at ({{sqrt(3)}},-2);   
\coordinate (J1) at ({3*sqrt(3)/2},-2.5);
\coordinate (K1) at ({2*sqrt(3)},-2);
\coordinate (L1) at ({2*sqrt(3)},-1);

\coordinate (G2) at ({5*sqrt(3)/2},-0.5);
\coordinate (H2) at ({{2*sqrt(3)}},-1);
\coordinate (I2) at ({{2*sqrt(3)}},-2);   
\coordinate (J2) at ({5*sqrt(3)/2},-2.5);
\coordinate (K2) at ({3*sqrt(3)},-2);
\coordinate (L2) at ({3*sqrt(3)},-1);

\coordinate (G3) at ({7*sqrt(3)/2},-0.5);
\coordinate (H3) at ({{3*sqrt(3)}},-1);
\coordinate (I3) at ({{3*sqrt(3)}},-2);   
\coordinate (J3) at ({7*sqrt(3)/2},-2.5);
\coordinate (K3) at ({4*sqrt(3)},-2);
\coordinate (L3) at ({4*sqrt(3)},-1);

\coordinate (G4) at ({9*sqrt(3)/2},-0.5);
\coordinate (H4) at ({{4*sqrt(3)}},-1);
\coordinate (I4) at ({{4*sqrt(3)}},-2);   
\coordinate (J4) at ({9*sqrt(3)/2},-2.5);
\coordinate (K4) at ({5*sqrt(3)},-2);
\coordinate (L4) at ({5*sqrt(3)},-1);

\coordinate (G5) at ({11*sqrt(3)/2},-0.5);
\coordinate (H5) at ({{5*sqrt(3)}},-1);
\coordinate (I5) at ({{5*sqrt(3)}},-2);   
\coordinate (J5) at ({11*sqrt(3)/2},-2.5);
\coordinate (K5) at ({6*sqrt(3)},-2);
\coordinate (L5) at ({6*sqrt(3)},-1);

\draw [line width=0.25mm] (M1)--(M3)--(P3)--(P1)--(M1); 
\draw[line width=0.25mm] (M2)--(P2);
\draw[line width=0.25mm] (N1)--(N3);
\draw [line width=0.25mm] (M5)--(M6)--(P6)--(P5)--(M5); 
\draw [line width=0.25mm] (N5)--(N6); 

\node[] (p) at (6,0) {$\cdots$};
\node[] (p) at (1,-1) {$1$};
\node[] (p) at (1,1) {$2$};
\node[] (p) at (3,-1) {$3$};
\node[] (p) at (3,1) {$4$};
\node[] (p) at (9,1) {$2n$};
\node[] (p) at (9,-1) {$2n-1$};

\end{tikzpicture} 
\caption{Rectangular strip containing $2n$ squares.} \label{fig:rec_grid n}
\end{figure} 

In a recent paper, Brown \cite{brown} studied such divisions and obtained a
system of recursive relations that we include below as Theorem \ref{tm:Brown}.
In order to state the Brown's results, we need one auxiliary term, more
specifically, the number of divisions of a $2\times n$ rectangular strip
into $k$ parts such that the squares in the last column belong to different
parts. We denote that number by $q_k(n)$ and show one such division in Figure \ref{fig:rec_grid examp} as an example.

\begin{figure}[ht]
\centering
\begin{tikzpicture}[scale=0.55]
\draw [line width=0.25mm, fill=green] (M2)--(M1)--(P1)--(P4)--(M4)--(M3)--(N3)--(N2)--(M2); 
\draw[line width=0.25mm,fill=yellow] (M2)--(M3)--(N3)--(N2)--(M2);
\draw [line width=0.25mm,fill=orange] (M4)--(M6)--(N6)--(N5)--(P5)--(P4)--(M4); 
\draw [line width=0.25mm, fill=blue] (N5)--(N6)--(P6)--(P5)--(N5);
\draw[dashed,opacity=0.4] (N1)--(N2) (N3)--(N5)  (N2)--(P2) (N3)--(P3) (M5)--(N5); 
 
\end{tikzpicture} 
\caption{One division of $2\times 5$ rectangular strip into $4$ parts with squares in the last column being in different parts. Total number of such divisions is denoted by $q_4(5)$. } \label{fig:rec_grid examp}
\end{figure}
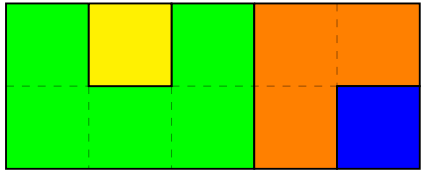 

\begin{theorem}[Brown] 
The number of divisions of $2\times n$ strip into $k$ parts satisfies following system of equations:
\begin{align*}
r_{k}(n+1)&=r_{k}(n)+3r_{k-1}(n)+r_{k-2}(n)+2q_{k}(n)\\
q_{k}(n+1)&=2r_{k-1}(n)+r_{k-2}(n)+q_{k}(n).
\end{align*}\label{tm:Brown} \end{theorem}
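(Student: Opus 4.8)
The plan is to establish the two recurrences by a direct combinatorial analysis: take a division of the $2\times(n+1)$ strip, look at how its parts interact with the last (the $(n+1)$st) column, and relate what remains to a division of the $2\times n$ strip. The state of the last column is governed by two binary choices — whether the horizontal edge separating the two squares of that column is cut, and whether the vertical edge joining the last column to the $n$th column is cut (in the case where the two squares of the last column lie in a common part, there is additionally the question of how that part connects to column $n$). Classifying divisions of the $2\times(n+1)$ strip according to these choices, and observing that removing the last column always yields a legitimate division of the $2\times n$ strip (possibly after noting that the part(s) touching column $n$ may or may not have had column $n$'s squares separated), gives a case split whose cases are counted by $r_k(n)$, $r_{k-1}(n)$, $r_{k-2}(n)$ and $q_k(n)$.

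Concretely, first I would set up notation for the ``profile'' of the last column of a division of the $2\times(n+1)$ strip: the pair (is the two squares of column $n+1$ in the same part? / are the squares of column $n+1$ attached to column $n$ or freshly cut off?). For the recurrence for $r_k(n+1)$: if the last column is a single new part disconnected from the rest, we get a division of $2\times n$ into $k-1$ parts, contributing $r_{k-1}(n)$; if the last column's two squares form their own two new parts, we get $r_{k-2}(n)$; if one square of the last column joins a part of column $n$ while the other is its own new part, there are choices (which of the two squares extends leftward, and — when the two squares of column $n$ are themselves in distinct parts — which of those two parts is extended), producing the $3r_{k-1}(n)$ and $2q_k(n)$ terms; and if both squares of the last column simply extend the part(s) of column $n$ without creating anything new, we get $r_k(n)$. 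The bookkeeping of exactly which configuration contributes to which coefficient — in particular teasing apart when the extension forces the two column-$n$ squares to be in the same part versus different parts, which is precisely where $q_k(n)$ enters — is the delicate point. The recurrence for $q_k(n+1)$ is obtained by the same method but restricting attention to divisions in which the two squares of column $n+1$ end up in different parts, which removes the $r_k(n)$ ``both extend the same part'' possibility and the pure $q_k(n)$-type term collapses appropriately, leaving $2r_{k-1}(n)+r_{k-2}(n)+q_k(n)$.

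I expect the main obstacle to be verifying that the case analysis is both exhaustive and non-overlapping, and in particular correctly matching the multiplicities ($3$, $2$, $2$, $1$) to the configurations. The subtlety is that deleting the last column does not simply forget information: a division of $2\times n$ counted by $r_k(n)$ and one counted by $q_k(n)$ both ``look the same'' after deletion unless one records whether the two squares of the new last column were in the same part, and one must be careful that the configurations of the $2\times(n+1)$ strip are put in bijection with the appropriately decorated divisions of $2\times n$. A clean way to handle this is to argue in the reverse direction — describe, for each division of $2\times n$ into $j$ parts (distinguishing the two cases according to whether its last column's squares are co-partitioned), all the ways to append a new column, count them, and check that the totals aggregate to the claimed right-hand sides — which makes exhaustiveness and disjointness transparent. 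Since the argument is elementary, I would also sanity-check the recurrences against the stated initial values $r_2(2)=6$, $r_3(2)=4$, $r_1(4)=1$ (here $q_1(n)=0$, $q_2(1)=1$) to catch any off-by-one in the coefficients.
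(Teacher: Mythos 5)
The paper itself offers no proof of this statement: it is quoted verbatim from Brown's paper, so there is no argument in the text to compare against. Your combinatorial plan --- classify divisions of the $2\times(n+1)$ strip by how the last column interacts with column $n$, and count extensions of each division of the $2\times n$ strip --- is the natural and correct route, and the ``reverse direction'' aggregation you propose at the end does close the argument. Two points need tightening. First, your attribution of terms to cases in the middle paragraph is off: the configurations ``one new square extends leftward, the other is a fresh singleton'' contribute $2r_{k-1}(n)$, and together with ``the two new squares form one fresh dimer part'' they account for the full $3r_{k-1}(n)$; the $2q_k(n)$ does \emph{not} come from there. It arises in the ``no new part is created'' case: if the two squares of column $n$ lie in the same part ($p_k(n)=r_k(n)-q_k(n)$ divisions) there is exactly one such extension, whereas if they lie in different parts ($q_k(n)$ divisions) there are three (each new square separately joins the part above/below it; or the two new squares are merged and absorbed into the top part; or merged and absorbed into the bottom part), giving $p_k(n)+3q_k(n)=r_k(n)+2q_k(n)$. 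You flag this bookkeeping as the delicate point and your proposed remedy resolves it, but as written the sketch assigns the coefficients to the wrong configurations.

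Second, the assertion that deleting the last column always yields a legitimate division of the $2\times n$ strip is used implicitly in both directions (it is what guarantees the extension map is exhaustive) and deserves a one-line justification: any path inside a part that detours through the two last squares can re-enter the first $n$ columns only at the two squares of column $n$, and those two squares are themselves adjacent, so truncated parts stay connected. With that lemma and the corrected tallies above, the two recurrences follow exactly as you outline, and they are consistent with the initial values $r_2(2)=6$, $r_3(2)=4$, $q_2(2)=3$ you propose to check.
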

It is an easy exercise to eliminate $q_k(n)$ from the system of equations in
Theorem  \ref{tm:Brown} and to obtain recursive relations for $r_{k}(n)$,
\begin{equation} \label{eq:r_k(n)}
r_{k}(n+1)=r_{k-2}(n)+3r_{k-1}(n)+2r_{k}(n)+r_{k-2}(n-1)+r_{k-1}(n-1)-r_{k}(n-1),\end{equation}
and for the overall number of such divisions,
\begin{equation} \label{eq:r(n)} r(n+1)=6r(n)+r(n-1).
\end{equation}

These recurrences will serve as the starting point of our Section 3, where
the Brown's results will be extended and refined by establishing recurrences
in $n$ for a fixed $k$ and by computing the expected values of $k$ in a
random division of a $2 \times n$ chocolate bar. \\

Next we consider a hexagonal strip composed of $n$ regular hexagons as shown
in Figure \ref{fig:grid1}. Throughout the paper, such  hexagonal strips
will be also referred to as honeycomb strips. As in the rectangular case, 
we are interested in counting the number of all possible divisions of this
strip that contains exactly $k$ pieces. Note that here $n$ does not denote
the number of columns but the total number of hexagons in the strip. Hence
we consider $k$ to be any integer between (and including) $1$ and $n$.
Again, only divisions along the edges of hexagons are considered. The
hexagons are added in the order as it is shown in Figure \ref{fig:grid1}.  

\begin{figure}[ht]
\centering

\begin{tikzpicture}[scale=0.55]

\draw [line width=0.25mm, fill=yellow] (A1) -- (B1) -- (C1) -- (H1) -- (I1) -- (J1) --(K1)--(J2)--(K2)--(J3)--(K3)--(L3)--(E3)--(D4)--(E4)--(F4)--(A4)--(B4)--(A3)--(B3)--(G3)--(H3)--(G2)--(F1)--(A1); 

\draw [line width=0.25mm, fill=orange] (A5)--(B5)--(C5)--(H5)--(G4)--(H4)--(I4)--(J4)--(K4)--(J5)--(K5)--(L5)--(E5)--(F5)--(A5); 
\draw [line width=0.25mm, fill=cyan] (A2) -- (B2) -- (C2) -- (D2) -- (E2) -- (F2) --(A2); 

\draw [line width=0.25mm, fill=green] (A) -- (B) -- (C) -- (H) -- (I) -- (J) --(K)--(L)--(E)--(F)--(A); 

\draw[dashed,opacity=0.4] (D) -- (C) (C1) -- (D1)--(E1) (D1)--(K1) (D2)--(K2) (C3)--(D3) (E3)--(F3) (K4)--(L4) (C5)--(D5); 

\node[] (p) at ({sqrt(3)},0) {2};
\node[] (p) at ({2*sqrt(3)},0) {4};
\node[] (p) at ({3*sqrt(3)},0) {6};
\node[] (p) at ({4*sqrt(3)},0) {8};
\node[] (p) at ({5*sqrt(3)},0) {10};
\node[] (p) at ({6*sqrt(3)},0) {12};

\node[] (p) at ({sqrt(3)/2},-1.5) {1};
\node[] (p) at ({3*sqrt(3)/2},-1.5) {3};
\node[] (p) at ({5*sqrt(3)/2},-1.5) {5};
\node[] (p) at ({7*sqrt(3)/2},-1.5) {7};
\node[] (p) at ({9*sqrt(3)/2},-1.5) {9};
\node[] (p) at ({11*sqrt(3)/2},-1.5) {11};
\end{tikzpicture}  
\caption{Honeycomb strip with 12 hexagons divided into 4 pieces.} \label{fig:grid1}
\end{figure}
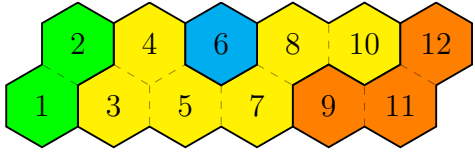 

Let $D_k(n)$ denote the set of all possible divisions of the honeycomb strip
with $n$ hexagons into $k$ pieces and $d_k(n)=|D_k(n)|$ the number of elements
of the set $D_k(n)$. Now we can state some simple cases: $d_1(n)=1$, for every
non-negative integer $n$, since there is only one way to obtain one part,
and $d_n(n)=1$, since there is only one way to obtain $n$ parts, that is to
let each hexagon form its own part. Furthermore, $d_k(n)=0$ for $k < 1$ and
for $k>n$. It is convenient to set $d_1(0)=1$. As an example, we list all
possible divisions of the strip containing $4$ hexagons as the first
non-trivial case.
\begin{align*}
d_1(4)=1 \phantom{-}& \left\lbrace 1234\right\rbrace\\
d_2(4)=6 \phantom{-}& \left\lbrace 1,  234\right\rbrace,\left\lbrace 2,  134\right\rbrace,\left\lbrace 3, 124\right\rbrace,\left\lbrace 4,  123\right\rbrace,\left\lbrace 12,  34\right\rbrace,\left\lbrace 13, 24\right\rbrace\\
d_3(4)=5 \phantom{-}& \left\lbrace 12,3,4\right\rbrace,\left\lbrace 13,2,4\right\rbrace,\left\lbrace 23,1,4\right\rbrace,\left\lbrace 24,1,3\right\rbrace,\left\lbrace 34,1,2\right\rbrace\\
d_4(4)=1 \phantom{-}& \left\lbrace 1,2,3,4\right\rbrace
\end{align*}
Note that the division $\left\lbrace 14,23\right\rbrace$ is not included,
since hexagons $1$ and $4$ are not adjacent, as shown in
Figure \ref{fig:grid n=4}, thus cannot form a part.
\begin{figure}[ht]
\centering

\begin{tikzpicture}[scale=0.55]

\draw [line width=0.25mm,fill=green] (A)--(B)--(C)--(H)--(I)--(J)--(K)--(J1)--(K1)--(L1)--(E1)--(F1)--(A1)--(B1)--(A); 
\draw[dashed,opacity=0.4] (F)--(E)--(D)--(C);
\draw[dashed,opacity=0.4] (C1) -- (D1); 
\draw[dashed,opacity=0.4] (D1) -- (E1);  
\draw[dashed,opacity=0.4] (K) -- (L);  
\node[] (p) at ({sqrt(3)},0) {2};
\node[] (p) at ({2*sqrt(3)},0) {4};
\node[] (p) at ({sqrt(3)/2},-1.5) {1};
\node[] (p) at ({3*sqrt(3)/2},-1.5) {3};
\end{tikzpicture} 

\begin{tikzpicture}[scale=0.55]

\draw [line width=0.25mm,fill=yellow] (G)--(H)--(I)--(J)--(K)--(L)--(G) ; 

\draw [line width=0.25mm,fill=green] (A)--(B)--(C)--(D)--(I1)--(J1)--(K1)--(L1)--(E1)--(F1)--(A1)--(B1)--(A); 

\draw[dashed,opacity=0.4] (D)--(E) -- (F) (C1)--(D1); 
\node[] (p) at ({sqrt(3)},0) {2};
\node[] (p) at ({2*sqrt(3)},0) {4};
\node[] (p) at ({sqrt(3)/2},-1.5) {1};
\node[] (p) at ({3*sqrt(3)/2},-1.5) {3};
\end{tikzpicture} \begin{tikzpicture}[scale=0.55]
\draw [line width=0.25mm,fill=yellow] (A)--(B)--(C)--(D)--(E)--(F)--(A); 

\draw [line width=0.25mm,fill=green] (G)--(H)--(I)--(J)--(K)--(J1)--(K1)--(L1)--(E1)--(F1)--(A1)--(B1)--(C1)--(H1)--(G); 

\draw[dashed,opacity=0.4] (C1) -- (D1); 
\draw[dashed,opacity=0.4] (K) -- (L);  
\node[] (p) at ({sqrt(3)},0) {2};
\node[] (p) at ({2*sqrt(3)},0) {4};
\node[] (p) at ({sqrt(3)/2},-1.5) {1};
\node[] (p) at ({3*sqrt(3)/2},-1.5) {3};
\end{tikzpicture} \begin{tikzpicture}[scale=0.55]

\draw [line width=0.25mm,fill=yellow] (G1)--(H1)--(I1)--(J1)--(K1)--(L1)--(G1) ; 

\draw [line width=0.25mm,line width=0.25mm,fill=green] (A)--(B)--(C)--(H)--(I)--(J)--(K)--(L)--(G1)--(L1)--(E1)--(F1)--(A1)--(B1)--(A); 

\draw[dashed,opacity=0.4] (D) -- (E); 
\draw[dashed,opacity=0.4] (E) -- (F);  
 \draw[dashed,opacity=0.4] (L) -- (G);  
 
\node[] (p) at ({sqrt(3)},0) {2};
\node[] (p) at ({2*sqrt(3)},0) {4};
\node[] (p) at ({sqrt(3)/2},-1.5) {1};
\node[] (p) at ({3*sqrt(3)/2},-1.5) {3};
\end{tikzpicture} \begin{tikzpicture}[scale=0.55]
\draw [line width=0.25mm,fill=yellow] (A1)--(B1)--(C1)--(D1)--(E1)--(F1)--(A1); 
\draw [line width=0.25mm,fill=green] (A)--(B)--(C)--(H)--(I)--(J)--(K)--(J1)--(K1)--(L1)--(G1)--(G1)--(F)--(A); 
\draw[dashed,opacity=0.4] (C)--(D) -- (E); 
\draw[dashed,opacity=0.4] (K) -- (L);  
\node[] (p) at ({sqrt(3)},0) {2};
\node[] (p) at ({2*sqrt(3)},0) {4};
\node[] (p) at ({sqrt(3)/2},-1.5) {1};
\node[] (p) at ({3*sqrt(3)/2},-1.5) {3};
\end{tikzpicture}  \begin{tikzpicture}[scale=0.55]

\draw [line width=0.25mm,fill=green] (A)--(B)--(C)--(H)--(I)--(J)--(K)--(L)--(E)--(F)--(A); 
\draw [line width=0.25mm,fill=yellow] (A1)--(B1)--(C1)--(H1)--(I1)--(J1)--(K1)--(L1)--(E1)--(F1)--(A1); 

\draw[dashed,opacity=0.4] (C) -- (D); 
\draw[dashed,opacity=0.4] (C1) -- (D1);  
  
\node[] (p) at ({sqrt(3)},0) {2};
\node[] (p) at ({2*sqrt(3)},0) {4};
\node[] (p) at ({sqrt(3)/2},-1.5) {1};
\node[] (p) at ({3*sqrt(3)/2},-1.5) {3};
\end{tikzpicture} \begin{tikzpicture}[scale=0.55]

\draw [line width=0.25mm,fill=green] (A)--(B)--(C)--(D)--(E)--(D1)--(E1)--(F1)--(A1)--(B1)--(A); 
\draw [line width=0.25mm,fill=yellow] (G)--(H)--(I)--(J)--(K)--(J1)--(K1)--(L1)--(G1)--(H1)--(G); 

\draw[dashed,opacity=0.4] (F) -- (E); 
\draw[dashed,opacity=0.4] (K) -- (L);  
  
\node[] (p) at ({sqrt(3)},0) {2};
\node[] (p) at ({2*sqrt(3)},0) {4};
\node[] (p) at ({sqrt(3)/2},-1.5) {1};
\node[] (p) at ({3*sqrt(3)/2},-1.5) {3};
\end{tikzpicture} 

\begin{tikzpicture}[scale=0.55]
\draw [line width=0.25mm,fill=yellow] (A1)--(B1)--(C1)--(D1)--(E1)--(F1)--(A1); 
\draw [line width=0.25mm,fill=orange] (G1)--(H1)--(I1)--(J1)--(K1)--(L1)--(G1) ; 
\draw [line width=0.25mm,fill=green] (A)--(B)--(C)--(H)--(I)--(J)--(K)--(L)--(E)--(F)--(A); 
\draw[dashed,opacity=0.4] (D) -- (C); 
\node[] (p) at ({sqrt(3)},0) {2};
\node[] (p) at ({2*sqrt(3)},0) {4};
\node[] (p) at ({sqrt(3)/2},-1.5) {1};
\node[] (p) at ({3*sqrt(3)/2},-1.5) {3};
\end{tikzpicture} \begin{tikzpicture}[scale=0.55]

\draw [line width=0.25mm,fill=yellow] (A) -- (B) -- (C) -- (D) -- (E) -- (F) --(A); 
\draw [line width=0.25mm,fill=orange] (A1)--(B1)--(C1)--(D1)--(E1)--(F1)--(A1); 

\draw [line width=0.25mm,fill=green] (G)--(H)--(I)--(J)--(K)--(J1)--(K1)--(L1)--(G1)--(H1)--(G); 

\draw[dashed,opacity=0.4] (K) -- (L); 
  
\node[] (p) at ({sqrt(3)},0) {2};
\node[] (p) at ({2*sqrt(3)},0) {4};
\node[] (p) at ({sqrt(3)/2},-1.5) {1};
\node[] (p) at ({3*sqrt(3)/2},-1.5) {3};
\end{tikzpicture} \begin{tikzpicture}[scale=0.55]

\draw [line width=0.25mm,fill=green] (A) -- (B) -- (C) -- (D) -- (I1) -- (J1) --(K1)--(L1)--(E)--(F)--(A); 
\draw [line width=0.25mm,fill=orange] (G)--(H)--(I)--(J)--(K)--(L)--(G) ; 

\draw [line width=0.25mm,fill=yellow] (A1)--(B1)--(C1)--(D1)--(E1)--(F1)--(A1); 

\draw[dashed,opacity=0.4] (D) -- (E); 
  
\node[] (p) at ({sqrt(3)},0) {2};
\node[] (p) at ({2*sqrt(3)},0) {4};
\node[] (p) at ({sqrt(3)/2},-1.5) {1};
\node[] (p) at ({3*sqrt(3)/2},-1.5) {3};
\end{tikzpicture} \begin{tikzpicture}[scale=0.55]

\draw [line width=0.25mm,fill=yellow] (G)--(H)--(I)--(J)--(K)--(L)--(G) ; 
\draw [line width=0.25mm,fill=orange] (G1)--(H1)--(I1)--(J1)--(K1)--(L1)--(G1) ; 

\draw [line width=0.25mm,fill=green] (A)--(B)--(C)--(D)--(C1)--(D1)--(E1)--(F1)--(A1)--(B1)--(A); 

\draw[dashed,opacity=0.4] (F) -- (E); 
 
\node[] (p) at ({sqrt(3)},0) {2};
\node[] (p) at ({2*sqrt(3)},0) {4};
\node[] (p) at ({sqrt(3)/2},-1.5) {1};
\node[] (p) at ({3*sqrt(3)/2},-1.5) {3};
\end{tikzpicture}    \begin{tikzpicture}[scale=0.55]

\draw [line width=0.25mm,fill=yellow] (A) -- (B) -- (C) -- (D) -- (E) -- (F) --(A); 
\draw [line width=0.25mm,fill=orange] (G)--(H)--(I)--(J)--(K)--(L)--(G) ; 

\draw [line width=0.25mm,fill=green] (A1)--(B1)--(C1)--(H1)--(I1)--(J1)--(K1)--(L1)--(E1)--(F1)--(A1); 

\draw[dashed,opacity=0.4] (C1) -- (D1); 
  
\node[] (p) at ({sqrt(3)},0) {2};
\node[] (p) at ({2*sqrt(3)},0) {4};
\node[] (p) at ({sqrt(3)/2},-1.5) {1};
\node[] (p) at ({3*sqrt(3)/2},-1.5) {3};
\end{tikzpicture} 

\begin{tikzpicture}[scale=0.55]

\draw [line width=0.25mm,fill=green] (A) -- (B) -- (C) -- (D) -- (E) -- (F) --(A); 
\draw [line width=0.25mm,fill=yellow] (A1)--(B1)--(C1)--(D1)--(E1)--(F1)--(A1); 
\draw [line width=0.25mm,fill=orange] (G)--(H)--(I)--(J)--(K)--(L)--(G) ; 
\draw [line width=0.25mm,fill=cyan] (G1)--(H1)--(I1)--(J1)--(K1)--(L1)--(G1) ; 

\node[] (p) at ({sqrt(3)},0) {2};
\node[] (p) at ({2*sqrt(3)},0) {4};
\node[] (p) at ({sqrt(3)/2},-1.5) {1};
\node[] (p) at ({3*sqrt(3)/2},-1.5) {3};
\end{tikzpicture} 

\caption{All possible divisions of the strip with $4$ hexagons.} \label{fig:grid n=4}
\end{figure}
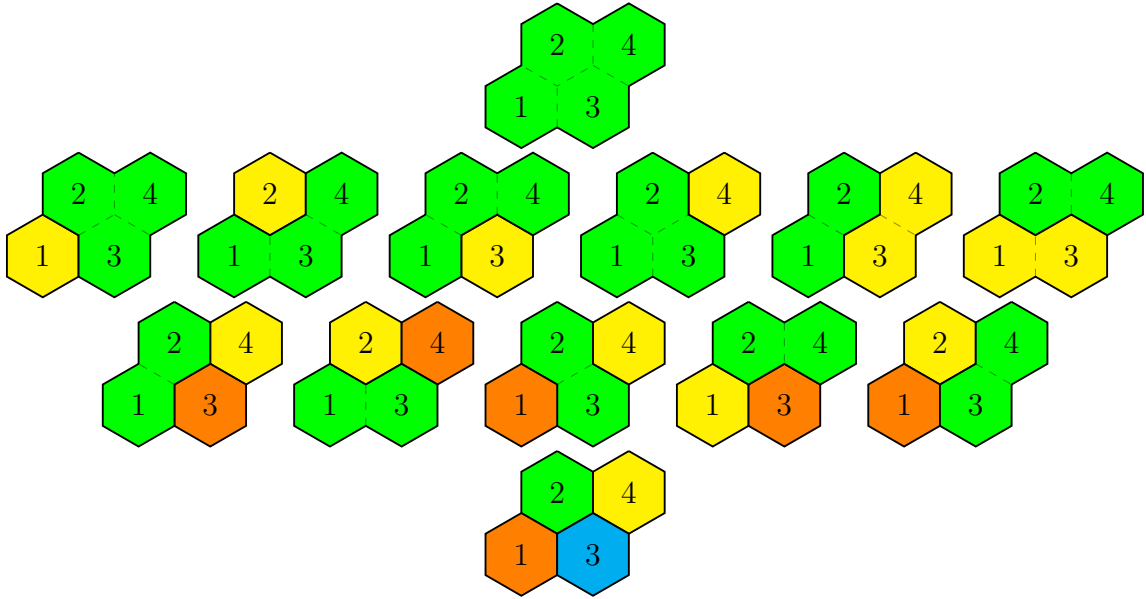 

Since the inner dual of $2\times n$ rectangular strip is a subgraph of the
inner dual of hexagonal strip of length $2n$, all divisions of a $2\times n$
rectangular strip are also valid divisions of a hexagonal strip with $n$
hexagons, but not vice versa. Figure \ref{fig:valid and not valid division n=4}
shows the division $\left\lbrace 1,23,4\right\rbrace$ which is legal in the
hexagonal strip but illegal in the rectangular strip.
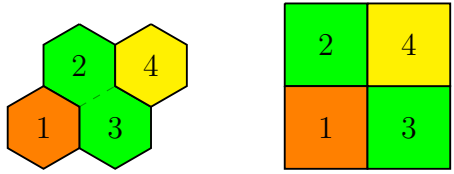
\begin{figure}[ht]
\centering \begin{tikzpicture}[scale=0.55]

\draw [line width=0.25mm,fill=green] (A) -- (B) -- (C) -- (D) -- (I1) -- (J1) --(K1)--(L1)--(E)--(F)--(A); 
\draw [line width=0.25mm,fill=orange] (G)--(H)--(I)--(J)--(K)--(L)--(G) ; 

\draw [line width=0.25mm,fill=yellow] (A1)--(B1)--(C1)--(D1)--(E1)--(F1)--(A1); 

\draw[dashed,opacity=0.4] (D) -- (E); 
  
\node[] (p) at ({sqrt(3)},0) {$2$};
\node[] (p) at ({2*sqrt(3)},0) {$4$};
\node[] (p) at ({sqrt(3)/2},-1.5) {$1$};
\node[] (p) at ({3*sqrt(3)/2},-1.5) {$3$};
\end{tikzpicture} \hspace{1 cm} \begin{tikzpicture}[scale=0.55]

\draw [line width=0.25mm,fill=green] (M1)--(M2)--(N2)--(N1)--(M1); 
\draw [line width=0.25mm,fill=orange] (N1)--(N2)--(P2)--(P1)--(N1); 
\draw [line width=0.25mm,fill=green] (N2)--(N3)--(P3)--(P2)--(N2); 
\draw [line width=0.25mm,fill=yellow] (M2)--(M3)--(N3)--(N2)--(M2); 

\node[] (p) at (1,-1) {$1$};
\node[] (p) at (1,1) {$2$};
\node[] (p) at (3,-1) {$3$};
\node[] (p) at (3,1) {$4$};
\end{tikzpicture} 
\caption{On the left is a valid division of a honeycomb strip, and on the right corresponding division of rectangular grid that is not allowed.} \label{fig:valid and not valid division n=4}
\end{figure}

\section{Dividing a chocolate bar into a given number of parts}

Numbers $r_k(n)$ of (\ref{eq:r_k(n)}) form a triangular array; its first few
lines are shown in Table \ref{tab:r_k(n)}.
In this section we investigate behavior of its columns, i.e., we turn our
attention to recursive relation for $r_k(n)$ where $k$ is fixed.
\let\mycolwd\relax % let \mytmplen to \relax
\newlength{\mycolwd}                                        % array column width
\settowidth{\mycolwd}{$\frac{5444}{(k+1)}$}% "width" of $e^{-\frac{i}{\hbar}|A|t$; largest element in array
\begin{table}[ht]
\centering
$\begin{array}[valign=c]{c| *{12}{@{}C{\mycolwd}@{}}}
n \backslash k& 1 & 2 & 3 & 4 & 5 & 6 & 7 & 8 & 9 & 10    \\[1.5ex]
\hline
1 &1 & 1\\
2 &1 & 6 & 4 & 1\\
3 &1 & 15 & 29 & 21 & 7 &1\\
4 &1 & 28 & 107 & 153 & 111 & 45 & 10 & 1\\
5 &1 & 45 & 286 & 678 & 831 & 603 & 274 & 78 & 13 & 1\\
\end{array}$
\caption{First few rows of of $r_k(n)$.}\label{tab:r_k(n)}
\end{table} 

As mentioned before, $r_0(n) = 0$ and $r_{1}(n)=1$, so we look at the first
non-trivial case, $k=2$. From relation (\ref{eq:r_k(n)}) we obtain
$r_{2}(n)=r_0(n-1)+3r_{1}(n-1)+2r_{2}(n-1)+r_{0}(n-2)+r_{1}(n-2)-r_{2}(n-2)$.
By plugging in $r_{0}(n)=0$ and $r_{1}(n)=1$, we obtain
\begin{align*}
r_{2}(n)=&2r_{2}(n-1)-r_{2}(n-2)+4\\
r_{2}(n-1)=&2r_{2}(n-2)-r_{2}(n-3)+4,\\
\end{align*}
and by subtracting these two equations we arrive at 
\begin{equation}
r_{2}(n)=3r_{2}(n-1)-3r_{2}(n-2)+r_{2}(n-3). \label{eq:r_2(n)}
\end{equation} 

Rewriting the trivial case 
\begin{equation}
r_1(n)=r_1(n-1)\label{eq:r_1(n)}
\end{equation} 
as
\begin{equation}
\binom{1}{0}r_{1}(n)=\binom{1}{1}r_{2}(n-1),
\end{equation}
and case $k = 2$ as
%Rewriting recursive relations (\ref{eq:r_1(n)}) and (\ref{eq:r_2(n)}) as 
\begin{equation}
\binom{3}{0}r_{2}(n)=\binom{3}{1}r_{2}(n)-\binom{3}{2}r_{2}(n-1)+\binom{3}{3}r_{2}(n-2)
\end{equation}
suggests that there is a pattern valid also for higher values of $k$. The
conjectured pattern is readily verified by induction, thus yielding the
following theorem.
\begin{theorem}\label{tm:binom recursion r_k(n)}
For integers $n,k\geq 1$ we have
\begin{equation}
\sum\limits_{j=0}^{2k-1}(-1)^{j}\binom{2k-1}{j}r_{k}(n-j)=0.
\end{equation}
\end{theorem}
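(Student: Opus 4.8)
\emph{Proof proposal.} The plan is to recast the assertion as a difference-operator identity and prove it by induction on $k$. Let $S$ denote the backward shift on sequences, $(Sf)(n)=f(n-1)$; then $\sum_{j=0}^{2k-1}(-1)^{j}\binom{2k-1}{j}r_k(n-j)=\bigl[(I-S)^{2k-1}r_k\bigr](n)$, so the theorem is equivalent to saying that the operator $(I-S)^{2k-1}$ annihilates the sequence $n\mapsto r_k(n)$. The two base cases are already in hand: for $k=1$ the claim is $(I-S)r_1=0$, i.e.\ \eqref{eq:r_1(n)}, and for $k=2$ it is $(I-S)^3r_2=0$, i.e.\ \eqref{eq:r_2(n)}.

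\textbf{Descent step.} Replacing $n$ by $n-1$ in \eqref{eq:r_k(n)} and collecting the $r_k$-terms on the left, the recurrence becomes, in operator form,
\[
(I-S)^2 r_k \;=\; S(I+S)\,r_{k-2}\;+\;S(3I+S)\,r_{k-1}.
\]
Fix $k\ge 3$ and assume the theorem for $k-1$ and for $k-2$, i.e.\ $(I-S)^{2k-3}r_{k-1}=0$ and $(I-S)^{2k-5}r_{k-2}=0$. Apply $(I-S)^{2k-3}$ to both sides; since all operators occurring are polynomials in $S$ and therefore commute,
\[
(I-S)^{2k-1} r_k \;=\; S(I+S)\bigl[(I-S)^{2k-3}r_{k-2}\bigr]\;+\;S(3I+S)\bigl[(I-S)^{2k-3}r_{k-1}\bigr].
\]
The second term is $0$ by the hypothesis for $k-1$; for the first, write $(I-S)^{2k-3}=(I-S)^2(I-S)^{2k-5}$ and use the hypothesis for $k-2$. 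Hence $(I-S)^{2k-1}r_k=0$, which closes the induction. The two exponents $2k-3$ and $2k-5$ are exactly $2(k-1)-1$ and $2(k-2)-1$, so the bookkeeping matches and no separate off-by-one case arises.

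\textbf{Main obstacle.} The algebra above is routine; the one point requiring care is the range of $n$ for which each identity is legitimate, since \eqref{eq:r_k(n)} is valid only from a small $n$ onward while the shifts move the argument around. The tidiest fix, which is how I would write it up, is to run the same computation on generating functions: with $R_k(x)=\sum_n r_k(n)x^n$ the recurrence gives $(1-x)^2R_k(x)=x(1+x)R_{k-2}(x)+x(3+x)R_{k-1}(x)+P_k(x)$ for a polynomial $P_k$ absorbing the initial-value corrections, and the identical commutation argument shows $(1-x)^{2k-1}R_k(x)$ is a polynomial; extracting coefficients yields the stated recurrence for $n$ beyond that polynomial's degree, with the finitely many remaining values checked against Table~\ref{tab:r_k(n)}.
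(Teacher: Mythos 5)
Your proof is correct and is essentially the paper's proof in disguise: the paper also inducts on $k$ with base cases $k=1,2$, multiplies the $j$-th shifted copy of \eqref{eq:r_k(n)} by $(-1)^j\binom{2k-3}{j-1}$ and sums --- which is exactly your application of $(I-S)^{2k-3}$ --- and then kills the residual $r_{k-1}$- and $r_{k-2}$-terms using the induction hypotheses for $k-1$ and $k-2$. Your operator formulation merely packages the paper's Pascal-identity bookkeeping into commutativity of polynomials in $S$, and your closing remark about the range of validity of $n$ is a legitimate point that the paper glosses over as well.
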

\begin{proof}
The proof is by induction. For $k=1,2$, the base of induction is true, as 
stated above. To verify the step of induction, we use recursion
(\ref{eq:r_k(n)}) to obtain a system of $2k-2$ equations as follows:
\begin{align*}
r_{k}(n)=\, &r_{k-2}(n-1)+3r_{k-1}(n-1)+2r_{k}(n-1)+\\&+r_{k-2}(n-2)+r_{k-1}(n-2)-r_{k}(n-2)\\
r_{k}(n-1)=\, &r_{k-2}(n-2)+3r_{k-1}(n-2)+2r_{k}(n-2)+\\&+r_{k-2}(n-3)+r_{k-1}(n-3)-r_{k}(n-3)\\
r_{k}(n-2)=\, &r_{k-2}(n-3)+3r_{k-1}(n-3)+2r_{k}(n-3)+\\&+r_{k-2}(n-4)+r_{k-1}(n-4)-r_{k}(n-4)\\
\vdots&\\
r_{k}(n-2k+3)=\, &r_{k-2}(n-2k+2)+3r_{k-1}(n-2k+2)+2r_{k}(n-2k+2)+\\&+r_{k-2}(n-2k+1)+r_{k-1}(n-2k+1)-r_{k}(n-2k+1)
\end{align*}
The term $r_{k}(n-j)$ appears in at most three equations, namely in
the $(j-1)^{\textup{st}}$, $j^{\textup{th}}$ and $(j+1)^{\textup{st}}$ equation.
To proceed forward, we multiply $j$-th equation by $(-1)^{j}\binom{2k-3}{j-1}$
and we add up all equations. For even $j$, the term $r_{k}(n-j)$ appears with
the coefficient
$$\binom{2k-3}{j-2}+2\binom{2k-3}{j-1}+\binom{2k-3}{j}=\binom{2k-1}{j},$$
and for odd $j$ with the same coefficient, but with the opposite sign. We
conclude that
\begin{equation}
r_{k}(n)=\sum\limits_{j=1}^{2k-1}(-1)^{j}\binom{2k-1}{j}r_{k}(n-j)+A_{k-1}(n)+A_{k-2}(n),
\end{equation}
where $A_{k-1}(n)$ and $A_{k-2}(n)$ are some expressions involving
$r_{k-1}(n-j)$ and $r_{k-2}(n-j)$, respectively. The claim of the Theorem
will be established if we show that both $A_{k-1}(n)$ and $A_{k-2}(n)$ are 
equal to zero. We first look at $A_{k-1}(n)$.
For $j\geq 1$, the term $r_{k-1}(n-j)$ appears twice in our system of
equations, in the $(j-1)^{\textup{st}}$ and in the $j^{\textup{th}}$ equation,
hence, the coefficient by $r_{k-1}(n-j)$ is
$\binom{2k-3}{j-2}-3\binom{2k-3}{j-1}$ for an odd $j$, and
$3\binom{2k-3}{j-1}-\binom{2k-3}{j-2}$ for an even $j$. So,
$$A_{k-1}(n)=\sum\limits_{j=1}^{2k-1}(-1)^{j}\left(3\binom{2k-3}{j-1}-\binom{2k-3}{j-2}\right)r_{k-1}(n-j).$$

For $k-1$ we can use the induction hypothesis, hence
 $$3\sum\limits_{j=0}^{2k-3}(-1)^{j}\binom{2k-3}{j}r_{k-1}(n-j-1)=0$$
and $$\sum\limits_{j=0}^{2k-3}(-1)^{j}\binom{2k-3}{j}r_{k-1}(n-j-2)=0.$$
After adding the equations we have
\begin{align*}
0&=3\sum\limits_{j=0}^{2k-3}(-1)^{j}\binom{2k-3}{j}r_{k}(n-j-1)+\sum\limits_{j=0}^{2k-3}(-1)^{j}\binom{2k-3}{j}r_{k}(n-j-2)\\
&=3\sum\limits_{j=1}^{2k-2}(-1)^{j-1}\binom{2k-3}{j-1}r_{k}(n-j)+\sum\limits_{j=2}^{2k-1}(-1)^{j}\binom{2k-3}{j-2}r_{k}(n-j)\\
&=\sum\limits_{j=1}^{2k-1}(-1)^{j}\left(\binom{2k-3}{j-2}-3\binom{2k-3}{j-1}\right) r_{k}(n-j),
\end{align*}
hence, $A_{k-1}(n)=0$. 

Similarly, $A_{k-2}(n)$ can be expressed as 
$$A_{k-2}(n)=\sum\limits_{j=1}^{2k-1}(-1)^{j}\left(\binom{2k-3}{j-1}-\binom{2k-3}{j-2}\right)r_{k-2}(n-j),$$
and, again, by using the induction hypothesis for $k-2$ we obtain 
$A_{k-2}(n)=0$. The proof follows along the same lines as for $A_{k-1}(n)=0$
and we omit the details.
%\begin{align*}
%0=&\sum\limits_{j=0}^{2k-5}(-1)^{j}\binom{2k-5}{j}r_{k-2}(n-j-1)-\sum\limits_{j=0}^{2k-5}(-1)^{j}\binom{2k-5}{j}r_{k-2}(n-j-2)-\\&-\sum\limits_{j=0}^{2k-5}(-1)^{j}\binom{2k-5}{j}r_{k-2}(n-j-3)+\sum\limits_{j=0}^{2k-5}(-1)^{j}\binom{2k-5}{j}r_{k-2}(n-j-4)\\
%=&\sum\limits_{j=1}^{2k-1}(-1)^{j}\left( \binom{2k-5}{j-1}+\binom{2k-5}{j-2}-\binom{2k-5}{j-3}-\binom{2k-5}{j-4}\right) r_{k-2}(n-j)\\
%=&\sum\limits_{j=1}^{2k-1}(-1)^{j-1}\left( \binom{2k-4}{j-1}-\binom{2k-4}{j-3}\right) r_{k-2}(n-j)\\
%=&\sum\limits_{j=1}^{2k-1}(-1)^{j-1}\left(\binom{2k-4}{j-1}-\binom{2k-3}{j-2}+\binom{2k-4}{j-2}\right) r_{k-2}(n-j)\\
%=&\sum\limits_{j=1}^{2k-1}(-1)^{j-1}\left(\binom{2k-3}{j-1}-\binom{2k-3}{j-2}\right) r_{k-2}(n-j)\\
%=&A_{k-2}(n),
%\end{align*}
This completes our proof. \end{proof}

Theorem \ref{tm:binom recursion r_k(n)} implies that all columns of the
array $r_k(n)$ are polynomials in $n$. Moreover, $r_k(n)$ is a polynomial
in $n$ of degree $2k-2$. The exact expressions can be easily 
obtained by fitting to the initial values, but we omit the details. Our
Theorem \ref{tm:binom recursion r_k(n)} reestablishes the polynomiality
results of references \cite{brown} and \cite{durham} in a more compact
and self-contained form.

A similar reasoning could be also employed near the upper end of the range
of $k$ and used to establish polynomiality of diagonals $r_{2n-k}(n)$,
going thus beyond the results of references \cite{brown,durham}.
Indeed, $r_{2n}(n) = 1$ for all non-negative integers $n$.
Furthermore, $r_{2n-1}(n) = 3n-2$, since among the $2n-1$ pieces there must
be exactly one dimer. That dimer is an edge in the inner dual of our bar,
hence an edge in a ladder graph with $n$ rungs, and there are exactly 
$3n-2$ such edges. In a similar way one can see that $r_{2n-2}(n)$ must be
a quadratic polynomial in $n$: A division into $2n-2$ parts can either
contain one trimer and $2n-3$ monomers, or two dimer and $2n-4$ monomers.
As the number of trimers is linear in $n$ and the number of pairs of dimers
is quadratic in $3n$, by fitting on the first few values for small $n$ one
obtains $r_{2n-2}(n) = \frac{9}{2}(n-1)(n-\frac{2}{3})$. By continuing
with the same reasoning, one obtains a general result.

\begin{theorem}\label{r_{n-k}(n)}
$r_{2n-k}(n)$ is a polynomial of degree $k$ in $n$ with the leading coefficient
$\frac{3^k}{k!}$.
\end{theorem}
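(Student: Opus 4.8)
The plan is to convert the ``column'' recurrence~(\ref{eq:r_k(n)}) into a recurrence that runs \emph{along the diagonals} of the triangle in Table~\ref{tab:r_k(n)} and then to induct on the diagonal index. Introduce $p_j(n):=r_{2n-j}(n)$, so that $p_0(n)=1$, $p_j(n)=0$ for $j<0$, and the claim to be proved is precisely that $p_k$ agrees with a polynomial of degree $k$ in $n$ with leading coefficient $3^k/k!$. Substituting $k=2n+2-j$ in~(\ref{eq:r_k(n)}) and rewriting each term $r_\ast(n)$ and $r_\ast(n-1)$ through its diagonal index, the recurrence turns into
\[
p_j(n+1)=p_j(n)+3\,p_{j-1}(n)+2\,p_{j-2}(n)+p_{j-2}(n-1)+p_{j-3}(n-1)-p_{j-4}(n-1).
\]
The decisive feature of this form is that the $j$-th diagonal appears on the right only through the single term $p_j(n)$; moving it to the left gives $p_j(n+1)-p_j(n)=Q_j(n)$ with $Q_j$ depending on strictly lower diagonals only.

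I would then run a strong induction on $j\ge 0$. The base case $j=0$ is $p_0\equiv 1$, a constant polynomial with leading coefficient $1=3^0/0!$, while the negatively indexed diagonals vanish identically. For the inductive step, assume the statement for $j-1,j-2,j-3,j-4$. Then $Q_j(n)=3\,p_{j-1}(n)+2\,p_{j-2}(n)+p_{j-2}(n-1)+p_{j-3}(n-1)-p_{j-4}(n-1)$ is a polynomial in $n$, and its leading term comes solely from $3\,p_{j-1}(n)$, of degree $j-1$, since every other summand has degree at most $j-2$; hence $Q_j$ has degree $j-1$ with leading coefficient $3\cdot\frac{3^{j-1}}{(j-1)!}=\frac{3^{j}}{(j-1)!}$. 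From $p_j(n+1)-p_j(n)=Q_j(n)$ and the summation (discrete integration) of a degree-$(j-1)$ polynomial, $p_j$ is a polynomial of degree $j$; writing $p_j(n)=c\,n^{j}+\cdots$, the coefficient of $n^{j-1}$ in $p_j(n+1)-p_j(n)$ is $c\,j$, so $c\,j=\frac{3^{j}}{(j-1)!}$ and $c=\frac{3^{j}}{j!}$, which is exactly the claim for $j$.

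The one place that genuinely needs care — and which I expect to be a nuisance rather than a deep obstacle — is the range of validity: relation~(\ref{eq:r_k(n)}), and with it the diagonal recurrence above, is guaranteed only away from the boundary of the triangle, so the conclusion is most safely phrased as ``there is a degree-$k$ polynomial agreeing with $r_{2n-k}(n)$ for all $n$ above a threshold depending on $k$,'' the finitely many exceptional small values being checked directly (just as the text does for $k\le 2$); one must also be mildly careful in the small-$j$ base cases, where the convention $r_0\equiv 0$ and the negatively indexed diagonals interact. Beyond that the argument is routine finite-difference bookkeeping.

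As an independent confirmation of the leading coefficient, one can read off the combinatorics directly: a division of the $2\times n$ bar into $2n-k$ parts is the same thing as a placement of pairwise disjoint connected pieces in the inner-dual ladder of the bar whose sizes exceed $1$ by a total of $k$, all remaining cells being singletons. Organising by the multiset of these excesses, a profile using $m$ pieces contributes $O(n^{m})$, and $m$ attains its maximum value $k$ exactly when every piece is a dimer, i.e.\ when one is counting $k$-matchings of the ladder. Since the ladder on $n$ rungs has $3n-2$ edges and only $O(n^{k-1})$ of the $k$-subsets of its edges fail to be a matching, that count equals $\binom{3n-2}{k}-O(n^{k-1})=\frac{3^{k}}{k!}n^{k}+O(n^{k-1})$, in agreement with the induction.
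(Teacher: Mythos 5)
Your argument is correct, and its main line is genuinely different from the one sketched in the paper. The paper's justification is combinatorial: it classifies divisions into $2n-k$ parts by the multiset of non-singleton pieces (dimers, trimers, \dots), observes that the dominant configuration consists of $k$ disjoint dimers, i.e.\ $k$-matchings of the ladder graph on $n$ rungs with its $3n-2$ edges, and reads off the degree and leading coefficient from $\binom{3n-2}{k}$ --- which is precisely your closing ``independent confirmation'' paragraph. Your primary proof instead reindexes recurrence~(\ref{eq:r_k(n)}) along the diagonals $p_j(n)=r_{2n-j}(n)$; I checked the substitution $k=2n+2-j$ and the resulting relation
\[
p_j(n+1)=p_j(n)+3p_{j-1}(n)+2p_{j-2}(n)+p_{j-2}(n-1)+p_{j-3}(n-1)-p_{j-4}(n-1)
\]
is right, as is the finite-difference induction: only $3p_{j-1}(n)$ contributes in degree $j-1$, so no cancellation of leading terms can occur, and discrete integration forces degree $j$ and leading coefficient $3^j/j!$ from $cj=3^j/(j-1)!$. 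This route is more self-contained and rigorous than the paper's sketch (it needs no bound on the number of placements of a fixed profile of pieces), and it is the natural diagonal analogue of the column analysis in Theorem~\ref{tm:binom recursion r_k(n)}; what it costs is exactly the boundary bookkeeping you flag --- the validity of~(\ref{eq:r_k(n)}) for $k$ near $2n$ depends on conventions such as the value of $r_0(0)$, so the conclusion should indeed be stated for $n$ beyond a $k$-dependent threshold with small cases checked directly. The combinatorial argument, by contrast, yields the statement for all $n$ at once and explains \emph{why} the leading coefficient is $3^k/k!$ (it counts $k$-matchings), but making the $O(n^{m})$ profile bounds fully precise takes comparable effort. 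Either argument alone suffices; having both is a useful cross-check.
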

We leave the details to the interested reader. 

We now move towards computing the bivariate generating function for $r_k(n)$.
Let $$F(x,y)=\sum\limits_{n\geq 1} \sum\limits_{k\geq 1}r_k(n)x^ny^k$$ 
denote the desired generating function. By starting from recurrence
(\ref{eq:r_k(n)}) we readily obtain
%\begin{align*}
%F(x,y)=&\sum\limits_{n\geq 1} \sum\limits_{k\geq 1}r_k(n)x^ny^k\\
%=&x(y+y^2)+x^2(y+6y^2+4y^3+y^4)+\sum\limits_{n\geq 3} \sum\limits_{k\geq 1}r_k(n)x^ny^k,
%\end{align*}
%and from there we have
%\begin{align*}
%\sum\limits_{n\geq 3} \sum\limits_{k\geq 1}r_k(n)x^ny^k=&x(y^2+3y+2)\left( F(x,y)-xy(1+y)\right)+x^2F(x,y)\left(y^2+y-1 \right)\\
%=&F(x,y)\left( x(y^2+3y+2)+x^2\left(y^2+y-1\right)\right)-\\&-x^2y(1+y)(y^2+3y+2),
%\end{align*}
%which finally yields the desired bivariate generating function
%$$F(x,y)=\dfrac{-x^2y^2+x^2y-xy^2-xy}{x^2y^2+x^2y-x^2+xy^2+3xy+2x-1}.$$ 
$$F(x,y)=\dfrac{xy(1-x+y+xy)}{1-(2+3y+y^2)x-(y^2+y-1)x^2}.$$ 
 
By substituting $y=1$ we obtain $$F(x,1)=\dfrac{2x}{1-6x-x^2},$$
the univariate generating function for the sequence $r_n$.

Now we can determine the expected number of pieces in a random division. We
rely on the following version of the Darboux's theorem \cite{bender}.

\begin{theorem}[Darboux]\label{tm:darboux} If the generating function
$f(x)=\sum_{n\geq 0}a_x^n$ of a sequence $(a_n)$ can we written in the form
$f(x)=\left(1-\frac{x}{\omega}\right)^{\alpha}h(x)$, where $\omega$ is the
smallest modulus singularity of $f$ and $h$ is analytic in $\omega$,
then $a_n\sim\frac{h(\omega)n^{-\alpha-1}}{\Gamma(-\alpha)\omega^n}$,
where $\Gamma$ denotes the gamma function.\end{theorem}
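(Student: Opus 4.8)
The plan is to reduce the asymptotics of $a_n=[x^n]f(x)$ to those of the coefficients of the single ``model'' singular factor $(1-x/\omega)^{\alpha}$, and then to control the remaining analytic factor $h$ by its value $h(\omega)$ at the singularity. First I would record the exact coefficients of the model function. By Newton's generalized binomial theorem,
\[
(1-x/\omega)^{\alpha}=\sum_{n\ge 0}(-1)^n\binom{\alpha}{n}\omega^{-n}x^n,
\qquad
(-1)^n\binom{\alpha}{n}=\frac{\Gamma(n-\alpha)}{\Gamma(-\alpha)\,\Gamma(n+1)}.
\]
The standard asymptotic for ratios of Gamma functions (a consequence of Stirling's formula), $\Gamma(n-\alpha)/\Gamma(n+1)\sim n^{-\alpha-1}$, then yields
\[
[x^n](1-x/\omega)^{\alpha}\sim\frac{n^{-\alpha-1}}{\Gamma(-\alpha)\,\omega^{n}},
\]
which is already the target expression up to the factor $h(\omega)$.

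Next I would isolate the leading contribution by writing
\[
f(x)=h(\omega)\,(1-x/\omega)^{\alpha}+(1-x/\omega)^{\alpha}\bigl(h(x)-h(\omega)\bigr).
\]
The first summand contributes exactly $h(\omega)[x^n](1-x/\omega)^{\alpha}$, asymptotic to $h(\omega)n^{-\alpha-1}/(\Gamma(-\alpha)\omega^{n})$, the claimed main term. Because $h$ is analytic at $\omega$, the difference $h(x)-h(\omega)$ vanishes there, so $h(x)-h(\omega)=(1-x/\omega)\widetilde h(x)$ for some $\widetilde h$ analytic at $\omega$; the second summand is therefore of the form $(1-x/\omega)^{\alpha+1}\widetilde h(x)$, a singularity one full order milder. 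The goal is then to show that its coefficients are $O(n^{-\alpha-2}\omega^{-n})=o(n^{-\alpha-1}\omega^{-n})$, so that they are absorbed into the error term and only the main term survives.

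The hard part is precisely this last transfer step: passing from the order of a singularity to the order of decay of the coefficients. I would establish it by a transfer lemma of singularity-analysis type, estimating coefficients through Cauchy's formula $a_n=\frac{1}{2\pi i}\oint f(x)\,x^{-n-1}\,dx$ on a Hankel-type contour that hugs the singularity at $\omega$: a circle of radius slightly exceeding $\omega$, indented by a small loop around $\omega$ at distance $\sim 1/n$. Splitting the contour into the inner loop near $\omega$ and the outer arc, the substitution $x=\omega(1+t/n)$ on the inner loop turns the integral of $(1-x/\omega)^{\beta}$ into a Hankel representation of $1/\Gamma(-\beta)$ and produces the $n^{-\beta-1}\omega^{-n}$ scaling, while the outer arc is exponentially smaller. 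This is where an implicit hypothesis must be made explicit: for the contour argument one needs $f$ to continue analytically to a slit neighbourhood of $\omega$ of radius exceeding $\omega$ (a $\Delta$-domain), with $\omega$ the only singularity on the circle $|x|=\omega$. Under the weaker hypotheses as literally stated one instead invokes Darboux's original argument, subtracting finitely many terms of the singular expansion and applying the Riemann--Lebesgue lemma to the Fourier coefficients of the resulting boundary function, whose required smoothness is guaranteed once enough singular terms have been removed. Either route reduces the whole statement to the model estimate of the first paragraph, completing the proof.
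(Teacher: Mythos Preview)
The paper does not prove this theorem at all: it is quoted verbatim as a known result from reference~\cite{bender} and then applied as a black box to extract asymptotics from the generating functions $F(x,1)$ and $\partial_y F(x,y)|_{y=1}$. There is therefore no ``paper's own proof'' to compare your attempt against.

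That said, your sketch is a sound outline of the standard approach. The decomposition $f=h(\omega)(1-x/\omega)^{\alpha}+(1-x/\omega)^{\alpha+1}\widetilde h$ is the right move, and you correctly flag the nontrivial part: the transfer from singularity type to coefficient growth, which needs either analytic continuation to a $\Delta$-domain (Flajolet--Odlyzko) or, under the hypotheses as literally stated, Darboux's original Riemann--Lebesgue argument after subtracting enough singular terms to make the boundary function sufficiently smooth. You also rightly note that the statement as printed is somewhat underspecified (no uniqueness of $\omega$ on its circle, no explicit $\Delta$-domain), which is typical of how this lemma is quoted in applied references; a fully rigorous proof would have to add those hypotheses or work around them.
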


Since $\omega=\sqrt{10}-3$ we can write
$$F(x,1)=\dfrac{2x}{x\left(\sqrt{10}-3\right)+1}\left(1-\dfrac{x}{\left(\sqrt{10}-3\right)}\right)^{-1}.$$ Hence, we have
$h(x)=\dfrac{2x}{x\left(\sqrt{10}-3\right)+1}$ and
$h(\omega)=\frac{\sqrt{10}}{10}$.
Furthermore,
$$\left.\frac{\partial F(x,y)}{\partial y}\right\rvert_{y=1}=\frac{-x(x^3+3x^2+7x-3)}{\left(x\left(\sqrt{10}-3\right)x+1\right)^2}\left(1-\dfrac{x}{\left(\sqrt{10}-3\right)}\right)^{-2}$$
yields
$g(x)=\frac{-x(x^3+3x^2+7x-3)}{\left(x\left(\sqrt{10}-3\right)x+1\right)^2}$
and $g(\omega)=\frac{3\sqrt{10}-4}{20}$.
By Theorem \ref{tm:darboux}, the expected number of parts is
$$\dfrac{\dfrac{g(\omega)n}{\Gamma(2)\omega^n}}{\dfrac{h(\omega)}{\Gamma(1)\omega^n}}=\left(\dfrac{3}{2}-\sqrt{\frac{2}{5}}\right) n.$$
Hence, we have established the following result for the expected number of
parts in a random division of a chocolate bar of length $n$.
\begin{theorem}
The expected number of parts in a random division of a chocolate bar of
length $n$ is given by $$\left(\dfrac{3}{2}-\sqrt{\frac{2}{5}}\right) n
\approx 0.867544 \,n.$$
\end{theorem}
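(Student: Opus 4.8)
The plan is to express the expected number of parts as a ratio of coefficient asymptotics and to extract both asymptotics from the bivariate generating function $F(x,y)$ using the Darboux estimate (Theorem \ref{tm:darboux}). If a division of a chocolate bar of length $n$ is chosen uniformly among the $r(n)$ possibilities, then
\[
\mathbb{E}[k]=\frac{\sum_{k\ge 1}k\,r_k(n)}{\sum_{k\ge 1}r_k(n)}=\frac{[x^n]\,\partial_y F(x,y)\big|_{y=1}}{[x^n]\,F(x,1)},
\]
because $\sum_{n,k}k\,r_k(n)x^ny^{k-1}=\partial_y F(x,y)$ and $\sum_{n,k}r_k(n)x^n=F(x,1)$; for each fixed $n$ the inner sums are finite, so differentiating under the sum is legitimate. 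Thus everything reduces to two applications of Darboux's theorem.

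First I would handle the denominator. Setting $y=1$ gives $F(x,1)=2x/(1-6x-x^2)$, whose poles are the reciprocals of the roots of $1-6x-x^2$, i.e. $x=-3\pm\sqrt{10}$. The one of smallest modulus is $\omega=\sqrt{10}-3\approx 0.162$, while $-3-\sqrt{10}\approx -6.16$ lies far away, so $\omega$ is the unique dominant singularity and it is a simple pole. Hence $F(x,1)=\left(1-\tfrac{x}{\omega}\right)^{-1}h(x)$ with $h(x)=\dfrac{2x}{x(\sqrt{10}-3)+1}$, which is analytic at $\omega$ (its only singularity is at $-3-\sqrt{10}$). Darboux's theorem with $\alpha=-1$ (so $\Gamma(1)=1$, $n^{-\alpha-1}=1$) then yields $r(n)\sim h(\omega)\,\omega^{-n}$ with $h(\omega)=\sqrt{10}/10$.

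Next I would treat the numerator. Differentiating $F(x,y)$ in $y$ and putting $y=1$ produces a rational function whose denominator is $(1-6x-x^2)^2$, so it has a double pole at $\omega$ and nothing closer; therefore it can be written as $\left(1-\tfrac{x}{\omega}\right)^{-2}g(x)$ with $g$ analytic at $\omega$ and $g(\omega)=(3\sqrt{10}-4)/20$. Darboux's theorem with $\alpha=-2$ (so $\Gamma(2)=1$, $n^{-\alpha-1}=n$) gives $\sum_{k}k\,r_k(n)\sim g(\omega)\,n\,\omega^{-n}$. Dividing, the factor $\omega^{-n}$ cancels and
\[
\mathbb{E}[k]\sim\frac{g(\omega)\,n}{h(\omega)}=\frac{(3\sqrt{10}-4)/20}{\sqrt{10}/10}\,n=\left(\frac{3}{2}-\frac{2}{\sqrt{10}}\right)n=\left(\frac{3}{2}-\sqrt{\frac{2}{5}}\right)n,
\]
which is the asserted estimate, numerically $\tfrac{3}{2}-\sqrt{2/5}\approx 0.867544$.

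The computations are routine once $F(x,y)$ is available; the only points needing genuine care are (i) confirming that $\omega=\sqrt{10}-3$ is the dominant singularity of \emph{both} $F(x,1)$ and $\partial_y F(x,y)|_{y=1}$ — that no root of $1-6x-x^2$ is closer to the origin and that the auxiliary factors $h$ and $g$ really are analytic there — and (ii) performing the partial differentiation of $F(x,y)$ and reducing $h(\omega)$ and $g(\omega)$ to closed surd form. I expect step (ii), the derivative algebra together with the radical simplifications, to be the main source of slips, though it poses no conceptual obstacle.
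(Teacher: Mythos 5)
Your proposal is correct and follows essentially the same route as the paper: apply Darboux's theorem to $F(x,1)$ with a simple pole at $\omega=\sqrt{10}-3$ to get $h(\omega)=\sqrt{10}/10$, and to $\partial_yF(x,y)|_{y=1}$ with a double pole there to get $g(\omega)=(3\sqrt{10}-4)/20$, then take the ratio. The resulting constant $\tfrac{3}{2}-\sqrt{2/5}$ matches the paper's computation exactly.
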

The above result is derived under the so-called equilibrium assumption, where
all divisions are equally likely.

The triangle of Table \ref{tab:r_k(n)} is not (yet) in the OEIS \cite{oeis}.
However,
its row sums appear as A078469, the number of compositions of ladder graphs
in the sense of reference \cite{knopf}. Hence our results could be also
interpreted as a refinement of the number of compositions of ladder graphs.
Sequence $r_3(n)$ appears as A345897, with the same interpretation as we
give here. Curiously, such an interpretation seems to be missing among
many combinatorial interpretations of A000384, the hexagonal numbers, which
appear as the second column of our triangle. Similarly, $r_{2n-2}(n)$
appears as A081266, but without the interpretation given here.

\section{Divisions of honeycomb strips}

\subsection{Recurrences, explicit formulas and generating functions}

Recall that $D_k(n)$ denotes the set of all possible divisions of the
honeycomb strip with $n$ hexagons into $k$ pieces and $d_k(n)=|D_k(n)|$
the number of elements of the set $D_k(n)$. In order to count the divisions
correctly, special attention must be paid to the rightmost two cells, since
the new $(n+1)^{\textup{st}}$ cell can interact only with them. Whether these
hexagons are in the same pieces or not plays crucial role in how the  new
hexagon can be added. We denote by $S_k(n)$ the set of all possible divisions
of the honeycomb strip with $n$ hexagons into $k$ pieces with two last 
hexagons in the different parts. Similarly, let $T_k(n)$ denote the set of
all possible divisions of the strip into $k$ pieces with two two rightmost
hexagons belonging to the same piece. Let $s_k(n)=|S_k(n)|$ and
$t_k(n)=|T_k(n)|$.  Since the last two hexagons can either be together or
separated, we have divided the set $D_k(n)$ into two disjoint sets,
$D_k(n)=S_k(n)\cup T_k(n)$, hence $d_k(n)=t_k(n)+s_k(n)$.  

\begin{figure}[ht]
\centering \begin{tikzpicture}[scale=0.55]

\draw [line width=0.25mm] (F)--(A) (K)--(J); 
\draw [line width=0.25mm, dotted] (A)--(B) (J)--(I); 

\draw [line width=0.25mm,fill=green] (A1)--(B1)--(C1)--(H1)--(I1)--(J1)--(K1)--(L1)--(E1)--(F1)--(A1); 

\draw[dashed,opacity=0.4] (C1)--(D1); 
\end{tikzpicture}

\caption{A honeycomb strip with two rightmost hexagons in the same piece.} 
\end{figure}
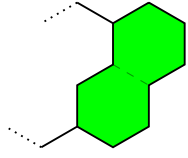 
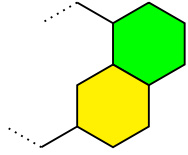
\begin{figure}[ht]
\centering
\begin{tikzpicture}[scale=0.55]

\draw [line width=0.25mm,fill=green] (A1)--(B1)--(C1)--(D1)--(E1)--(F1)--(A1); 
\draw [line width=0.25mm,fill=yellow] (G1)--(H1)--(I1)--(J1)--(K1)--(L1)--(G1) ; 

\draw [line width=0.25mm] (F)--(A) (K)--(J); 
\draw [line width=0.25mm, dotted] (A)--(B) (J)--(I); 

\draw[dashed,opacity=0.4] (G1) -- (H1); 
\end{tikzpicture} 
\caption{A honeycomb strip with two rightmost hexagons in different pieces.} 
\end{figure} 

%Now we can state our first recursive relation. 
We first establish an auxiliary result.

\begin{theorem}\label{tm:recursive s(n+1,k)} For $n\geq 1$, the number of all
possible divisions $s_k(n)$ of the honeycomb strip with $n$ hexagons into
$k$ pieces, with hexagons in the last column being in the different pieces,
satisfies the following relation: 
\begin{align}\label{eq:s(n,k) recursive} s_k(n+1)&=s_{k-1}(n)+2s_k(n)-s_{k}(n-1).\end{align}
\end{theorem}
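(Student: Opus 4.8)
The plan is to reduce the three-term recurrence to two one-step identities relating the numbers $d_k(n)$, $s_k(n)$ and $t_k(n)=d_k(n)-s_k(n)$, each proved by a direct bijection that exploits the fact already noted before the statement: in the inner dual of the honeycomb strip the newly adjoined hexagon $n+1$ is adjacent only to the two rightmost hexagons $n$ and $n-1$, and these two are themselves adjacent. Throughout I identify a division with a partition of $\{1,\dots,n\}$ into $k$ parts, each inducing a connected subgraph of that inner dual.

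\emph{First identity:} $s_k(n+1)=s_k(n)+d_{k-1}(n)$. I would classify a division in $S_k(n+1)$ according to the part $P$ containing hexagon $n+1$. Since $n$ and $n+1$ lie in different parts and $n+1$ has no neighbours other than $n$ and $n-1$, either $P=\{n+1\}$, and deleting this singleton yields an arbitrary division of the $n$-strip into $k-1$ parts (contributing $d_{k-1}(n)$), or $|P|\ge 2$, in which case $n-1\in P$ in order for $P$ to stay connected, so $n-1$ and $n$ lie in different parts; deleting the leaf $n+1$ from $P$ then gives a division counted by $s_k(n)$, while re-attaching $n+1$ to the part of $n-1$ is the inverse map and is always legal. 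This yields the claimed split.

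\emph{Second identity:} $t_k(n+1)=d_k(n)$, equivalently (using $d_k=s_k+t_k$) $d_k(n+1)=d_k(n)+s_k(n+1)$. Here I would observe that adjoining hexagon $n+1$ to the part of hexagon $n$ is always legal, keeps the number of parts equal to $k$, and — because $n-1$ and $n$ are adjacent, so $n+1$ is never a cut vertex of its part — deleting $n+1$ from that part again leaves a connected part; this is the required bijection between $T_k(n+1)$ and the set of all divisions of the $n$-strip into $k$ parts. Combining, the second identity with $k$ replaced by $k-1$ gives $d_{k-1}(n)=d_{k-1}(n-1)+s_{k-1}(n)$, while the first identity used at $n-1$ gives $d_{k-1}(n-1)=s_k(n)-s_k(n-1)$. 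Substituting both into $s_k(n+1)=s_k(n)+d_{k-1}(n)$ produces
\[
s_k(n+1)=s_k(n)+\big(s_k(n)-s_k(n-1)\big)+s_{k-1}(n)=s_{k-1}(n)+2s_k(n)-s_k(n-1),
\]
as claimed.

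I expect the genuine content to lie in making the two bijections airtight: precisely describing the adjacencies of the inner dual, so that "hexagon $n+1$ sees only $n$ and $n-1$" and "$n-1\sim n$" are rigorously justified; checking that each re-attachment operation really produces a division with the prescribed status of the last two cells and with exactly $k$ parts; and verifying that nothing is over- or under-counted. The small-$n$ boundary, where hexagon $n-1$ may fail to exist, has to be handled by hand together with the conventions such as $d_1(0)=1$; the closing algebraic manipulation is then entirely routine.
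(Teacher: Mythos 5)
Your proposal is correct and follows essentially the same route as the paper: you establish the auxiliary relation $s_k(n+1)=d_{k-1}(n)+s_k(n)$ by casing on the part containing hexagon $n+1$, use the bijection $t_k(n+1)=d_k(n)$ given by deleting the last hexagon, and eliminate $d_{k-1}$ by the same substitution. The only difference is cosmetic — you phrase the two counting steps as explicit bijections on the inner dual rather than as an add-a-hexagon case analysis.
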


\begin{proof} We start with a strip containing $n$ hexagons and add one new
hexagon to obtain a strip with $n+1$ hexagons. The new  hexagon can either
increase the number of parts in division by $1$ or not increase this number.
To obtain a division with $k$ pieces, we can only start with the
division with $k-1$ or $k$ pieces. These are two disjoint sets, so the
number of all divisions will be the sum of these cases.

When starting with division consisting of $k-1$ pieces, we can obtain $k$
pieces by adding the new hexagons as individual pieces. Since there is only
one way to do that, the number of divisions that can be obtained this way is
$d_{k-1}(n)$. Note that the condition that rightmost two hexagons belong to
different pieces is satisfied, as shown in
Figure \ref{Fig:S(n+1,k) from D(n,k-1)}. 

\begin{figure}[ht]
\centering \begin{tikzpicture}[scale=0.55]

%\draw [line width=0.25mm,opacity=0.8] (A) -- (B) -- (C) -- (D) -- (E) -- (F) --(A); 
\draw [line width=0.25mm,fill=green] (A1)--(B1)--(C1)--(D1)--(E1)--(F1)--(A1); 
%\draw [line width=0.25mm,opacity=0.8] (G)--(H)--(I)--(J)--(K)--(L)--(G) ; 
\draw [line width=0.25mm,fill=yellow] (G1)--(H1)--(I1)--(J1)--(K1)--(L1)--(G1) ; 

\draw [line width=0.25mm] (F)--(A) (K)--(J); 
\draw [line width=0.25mm, dotted] (A)--(B) (J)--(I);

%\draw [line width=0.25mm,fill=green] (A1)--(B1)--(C1)--(D1)--(I1)--(J1)--(K1)--(L1)--(G1)--(F1)--(A1); 

\draw[dashed,opacity=0.4] (G1) -- (H1); 
  
%\node[] (p) at (-1.5,-1) {$\cdots$};
%\node[] (p) at (0,0) {$n$};
\node[font=\small] (p) at ({2*sqrt(3)},0) {$n+1$};
%\node[font=\small] (p) at ({sqrt(3)/2},-1.5) {$n$};
\node[font=\small] (p) at ({3*sqrt(3)/2},-1.5) {$n$};
\end{tikzpicture} 
\caption{The element of $S_{k}(n+1)$ obtained from the element of
$D_{k-1}(n)$ by adding the new hexagon as separated piece. }
\label{Fig:S(n+1,k) from D(n,k-1)}
\end{figure}
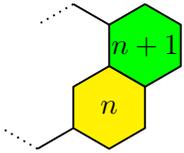

It remains to consider one last case. We start with a strip divided into $k$
pieces and we add $(n+1)^{\textup{st}}$ hexagon. If the last two hexagons
in the division are together, we cannot add new hexagons so that
number of parts remains the same and that new hexagons are in different
pieces. 

Now we move to the case where the last two hexagons in the division
are separated. There is only one way to add new hexagons to the existing
strip, to put the $(n+1)^{\textup{st}}$ hexagon together with
$(n-1)^{\textup{st}}$ (see Figure \ref{Fig:S(n+1,k) from S(n,k)}). Every
other layout would be in contradiction with either number of pieces or the
fact that two last hexagons should be separated, since putting
$(n+1)^{\textup{st}}$ hexagon together with $n^{\textup{th}}$ hexagon would
produce the element of $T_k(n)$. So in this case we have $s_k(n)$ ways to
obtain the desired division.

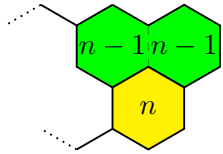
\begin{figure}[ht]
\centering \begin{tikzpicture}[scale=0.55]

\draw [line width=0.25mm,fill=green] (A1)--(B1)--(C1)--(D1)--(E1)--(D2)--(E2)--(F2)--(A2)--(B2)--(A1); 
\draw [line width=0.25mm,fill=yellow] (G2)--(H2)--(I2)--(J2)--(K2)--(L2)--(G2) ; 

\draw [line width=0.25mm] (F)--(A) (K1)--(J1); 
\draw [line width=0.25mm, dotted] (A)--(B) (J1)--(I1); 

\draw[dashed,opacity=0.4] (E1) -- (F1);

\node[font=\small] (p) at ({2*sqrt(3)},0) {$n-1$};
\node[font=\small] (p) at ({3*sqrt(3)},0) {$n-1$};
\node[font=\small] (p) at ({5*sqrt(3)/2},-1.5) {$n$};
\end{tikzpicture} 
\caption{The element of $S_k(n+1)$ obtained from the element of $S_k(n)$ by
joining the new hexagon with $(n-1)^{\textup{st}}$ hexagon}
\label{Fig:S(n+1,k) from S(n,k)}
\end{figure} 

By summing these two cases, we obtain the recursive relation
\begin{equation} \label{eq:s(n,k) aux}
s_k(n+1)=d_{k-1}(n)+s_{k}(n).
\end{equation} 

To eliminate $d_{k-1}(n)$ from relation (\ref{eq:s(n,k) aux}), we use the
fact that $d_{k-1}(n)=t_{k-1}(n)+s_{k-1}(n)$. By removing the last hexagon
from the strip, we establish a 1-to-1 correspondence between all division of a
strip with $n-1$ hexagons and divisions of a strip with $n$ hexagons where
two last hexagons are in the same part. Hence, $t_k(n)=d_k(n-1)$. Then we
have
\begin{align*}
s_{k}(n+1)&=s_{k-1}(n)+t_{k-1}(n)+s_k(n)\\
&=s_{k-1}(n)+d_{k-1}(n-1)+s_k(n), 
\end{align*} 
hence $d_{k-1}(n-1)=s_k(n+1)-s_{k-1}(n)-s_k(n)$, which combined with
relation (\ref{eq:s(n,k) aux}) yields
\begin{align*}
s_k(n+1)&=s_{k-1}(n)+2s_k(n)-s_{k}(n-1)
\end{align*}
and we proved the theorem.
\end{proof}  
  
By disregarding values of $k$ in recursive relation \ref{eq:s(n,k) recursive}
we obtain $$s(n+1)=3s(n)-s(n-1),$$ where $s(n)$ represents the number of
all divisions of a honeycomb strip of length $n$ with two last hexagons in
different parts. Since we obtained the same recursive relation as for bisection
of Fibonacci sequence with $s(1)=0$ and $s(2)=1$ we have $$s(n)=F_{2n-2}.$$  

Our main result of this section now follows by much the same reasoning, as
$d_k(n)$ satisfy the same recurrence as $s_k(n)$. We state it without
proof.
\begin{theorem}\label{tm:recursive d(n+1,k)} For $n\geq 1$, the number of
all possible divisions $d_k(n)$ of $n$  honeycomb strip into $k$ pieces
satisfies the following relation: 
\begin{align}\label{eq:d(n,k) recursive} 
d_k(n+1)= d_{k-1}(n)+2d_k(n)-d_k(n-1).
\end{align}
\end{theorem}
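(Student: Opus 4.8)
The plan is to mirror the proof of Theorem \ref{tm:recursive s(n+1,k)} almost verbatim, since the combinatorial situation for $d_k(n)$ is governed by the same interaction with the two rightmost hexagons. First I would partition $D_k(n+1)$ according to whether the newly added $(n+1)^{\textup{st}}$ hexagon increases the number of parts or not: a division in $D_k(n+1)$ either restricts to a division in $D_{k-1}(n)$ (when the new hexagon forms its own part) or to a division in $D_k(n)$ (when the new hexagon is merged with one of the two rightmost hexagons of the length-$n$ strip). These cases are disjoint and exhaust $D_k(n+1)$, so $d_k(n+1) = d_{k-1}(n) + c_k(n)$, where $c_k(n)$ counts the divisions obtained by the merging operation.

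Next I would analyze $c_k(n)$. Adding the new hexagon to an existing part can be done in two ways: join it with the $n^{\textup{th}}$ hexagon, or join it with the $(n-1)^{\textup{st}}$ hexagon. If the two rightmost hexagons of the length-$n$ strip lie in the \emph{same} part, both choices land the new hexagon in that same part, producing the \emph{same} division, so this contributes $t_k(n)$ divisions in total; if they lie in \emph{different} parts, the two choices produce genuinely different divisions, contributing $2s_k(n)$. Hence $c_k(n) = t_k(n) + 2s_k(n) = d_k(n) + s_k(n)$, giving the auxiliary relation $d_k(n+1) = d_{k-1}(n) + d_k(n) + s_k(n)$. (One must double-check the edge behavior at the left end of the strip, i.e. for small $n$, but for $n \geq 1$ the $(n-1)^{\textup{st}}$ hexagon still exists as a target and the bookkeeping is unaffected.)

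To eliminate $s_k(n)$ I would invoke relation (\ref{eq:s(n,k) aux}) from the proof of Theorem \ref{tm:recursive s(n+1,k)}, namely $s_k(n+1) = d_{k-1}(n) + s_k(n)$, together with the identity $t_k(n) = d_k(n-1)$ established there. From $d_k(n+1) = d_{k-1}(n) + d_k(n) + s_k(n)$ we get $s_k(n) = d_k(n+1) - d_{k-1}(n) - d_k(n)$; substituting the shifted version $s_k(n-1) = d_k(n) - d_{k-1}(n-1) - d_k(n-1)$ and using $s_k(n) = d_{k-1}(n-1) + s_k(n-1)$ (which is (\ref{eq:s(n,k) aux}) at argument $n-1$), one obtains, after cancellation,
\begin{equation*}
d_k(n+1) = d_{k-1}(n) + 2 d_k(n) - d_k(n-1),
\end{equation*}
exactly as claimed.

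The main obstacle is not any single computation — the algebra is short — but rather verifying the combinatorial counting of $c_k(n)$ carefully: specifically, confirming that when the two rightmost hexagons are in different parts there are \emph{exactly} two legal ways to attach the new hexagon (and not more, e.g. attaching to both simultaneously, which would merge parts and change $k$), and that these yield distinct divisions, while in the same-part case the two attachments coincide. This is precisely the point where the geometry of the honeycomb strip (each new hexagon being adjacent only to the two previous ones) enters, and it is the step that the authors signal can be handled "by much the same reasoning" as Theorem \ref{tm:recursive s(n+1,k)}; making that reasoning explicit is the substance of the proof.
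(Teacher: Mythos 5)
Your argument is correct and is essentially the ``same reasoning'' the paper alludes to when it states Theorem \ref{tm:recursive d(n+1,k)} without proof: a case analysis on whether the new hexagon forms its own part or joins the part containing hexagon $n$ or $n-1$, yielding $d_k(n+1)=d_{k-1}(n)+t_k(n)+2s_k(n)$, followed by elimination using $t_k(n)=d_k(n-1)$. The only cosmetic remark is that your final elimination can be shortened to one line: since $s_k(n)=d_k(n)-t_k(n)=d_k(n)-d_k(n-1)$, substituting directly into your auxiliary relation $d_k(n+1)=d_{k-1}(n)+d_k(n)+s_k(n)$ gives the claim at once, without routing through (\ref{eq:s(n,k) aux}).
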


Again, by grouping together terms of recurrence \ref{eq:d(n,k) recursive}
with respect to $n$, we obtain the recurrence satisfied by the sequence
$d(n)$ counting the total number of subdivisions of a honeycomb strip of
length $n$ as
$$ d(n+1) = 3d(n) - d(n-1).$$
Taking into account the initial conditions $d(1) = 1$ and $d(2) = 2$ yields
a very simple answer.
\begin{theorem}\label{odd_fibo}
The total number of divisions of a honeycomb strip of length $n$ is given
by $d(n) = F_{2n-1}$, where $F_n$ denotes the $n^{\rm th}$ Fibonacci
number.
\end{theorem}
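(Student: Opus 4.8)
The plan is to derive the recurrence $d(n+1) = 3d(n) - d(n-1)$ for the total number of divisions and then match it against the bisected Fibonacci sequence with the correct initial conditions. For the recurrence, I would sum the relation of Theorem~\ref{tm:recursive d(n+1,k)} over all admissible $k$. Since $d(n) = \sum_{k\geq 1} d_k(n)$, summing $d_k(n+1) = d_{k-1}(n) + 2d_k(n) - d_k(n-1)$ over $k$ gives $d(n+1) = d(n) + 2d(n) - d(n-1) = 3d(n) - d(n-1)$, where the $\sum_k d_{k-1}(n) = \sum_k d_k(n) = d(n)$ step uses only a shift of the summation index together with the boundary conventions $d_k(n) = 0$ for $k < 1$ and $k > n$. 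This is the same argument already carried out for $s(n)$ in the excerpt, so it is routine.

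Next I would pin down the initial conditions. We have $d(1) = d_1(1) = 1$ and $d(2) = d_1(2) + d_2(2) = 1 + 1 = 2$ from the enumerations given earlier. Then I would verify the closed form $d(n) = F_{2n-1}$ by a short induction: the odd-indexed Fibonacci numbers satisfy $F_{2n+1} = 3F_{2n-1} - F_{2n-3}$ (this follows from applying the defining recurrence $F_{m} = F_{m-1} + F_{m-2}$ twice, or equivalently from the identity $F_{2n+1} = 3F_{2n-1} - F_{2n-3}$ which is the standard bisection recurrence for Fibonacci numbers), and the base cases $F_1 = 1 = d(1)$ and $F_3 = 2 = d(2)$ match. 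Since both sequences satisfy the same second-order linear recurrence with the same two initial values, they coincide for all $n \geq 1$.

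The only step with any subtlety is making the index-shift in $\sum_k d_{k-1}(n) = d(n)$ fully rigorous, i.e.\ checking that no boundary term is lost or double-counted when $k$ ranges over all integers with the stated vanishing conventions; but this is precisely the same bookkeeping used for the $s_k(n)$ computation and presents no real obstacle. Alternatively, one can avoid the summation entirely: the transfer-matrix formulation promised later in the paper would give $d(n)$ directly as an entry of a power of a fixed $2\times 2$ (or $3\times 3$) matrix whose characteristic polynomial is $\lambda^2 - 3\lambda + 1$, immediately yielding both the recurrence and the Fibonacci closed form. For the present statement, however, the elementary summation argument is shortest and self-contained, so that is the route I would take.
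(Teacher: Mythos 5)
Your proposal is correct and follows essentially the same route as the paper: sum the recurrence of Theorem~\ref{tm:recursive d(n+1,k)} over $k$ to obtain $d(n+1)=3d(n)-d(n-1)$, then match initial conditions $d(1)=1$, $d(2)=2$ against the bisection recurrence for odd-indexed Fibonacci numbers. The only difference is that you spell out the index-shift bookkeeping and the induction explicitly, which the paper leaves implicit.
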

The above theorem yields a nice combinatorial interpretation of the 
odd-indexed Fibonacci numbers which seems to be absent from the entry
A001519 in the OEIS. It would be interesting to obtain our Theorem 
\ref{odd_fibo} by establishing a bijection between our divisions and
some of the objects listed there.

With the above result at hand, it is not too difficult to guess the 
explicit formulas for $d_k(n)$ and $s_k(n)$. The following theorem is
easily proved by simply verifying that the proposed expressions satisfy
the respective recurrences and initial conditions, and we omit the
details. Again, it would be more interesting to verify the formulas
in a combinatorial way.
\begin{theorem} \label{tm:closed form d(n,k)} The number of all divisions
$d_k(n)$ of the honeycomb strip with $n$ hexagons into exactly $k$ pieces is
$$d_k(n)=\binom{n+k-2}{n-k}.$$
The number $s_k(n)$ of all divisions of the honeycomb strip with $n$
hexagons into $k$ pieces such that two rightmost hexagons belong to
different pieces is equal to zero if $n=1$ and for $n \geq 2$ it is given as
$$s_k(n)=\binom{n+k-3}{n-k}.$$
\end{theorem}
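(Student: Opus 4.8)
The plan is to prove Theorem \ref{tm:closed form d(n,k)} by induction, exactly as the paper suggests: verify that the proposed closed forms satisfy the recurrences of Theorems \ref{tm:recursive d(n+1,k)} and \ref{tm:recursive s(n+1,k)} together with the initial/boundary conditions. Since the recurrence for $d_k(n)$ and $s_k(n)$ is identical, the core computation is a single binomial identity, and the two halves of the theorem differ only in the index shift. For the $d_k(n)$ part, I would set $f(n,k) = \binom{n+k-2}{n-k}$ and check that
\[
f(n+1,k) = f(n,k-1) + 2f(n,k) - f(n-1,k).
\]
Writing this out, the claim reduces to
\[
\binom{n+k-1}{n-k+1} = \binom{n+k-3}{n-k+1} + 2\binom{n+k-2}{n-k} - \binom{n+k-3}{n-k-1}.
\]

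The key step is to simplify the right-hand side using Pascal's rule. First I would use $\binom{n+k-2}{n-k} = \binom{n+k-3}{n-k} + \binom{n+k-3}{n-k-1}$ on one copy of the middle term, so that the $-\binom{n+k-3}{n-k-1}$ cancels one summand and the right-hand side becomes $\binom{n+k-3}{n-k+1} + \binom{n+k-3}{n-k} + \binom{n+k-2}{n-k}$. Then $\binom{n+k-3}{n-k+1} + \binom{n+k-3}{n-k} = \binom{n+k-2}{n-k+1}$ by Pascal's rule again, leaving $\binom{n+k-2}{n-k+1} + \binom{n+k-2}{n-k}$, which is $\binom{n+k-1}{n-k+1}$ by one final application of Pascal. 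That matches the left-hand side, so the recurrence holds. Technically this manipulation is valid for all integers once one fixes the convention $\binom{m}{j}=0$ for $j<0$ or $j>m$ (with $m\geq 0$), so the induction goes through uniformly; one should note that $\binom{n+k-2}{n-k}$ genuinely vanishes when $k>n$ or $k<1$, consistent with $d_k(n)=0$ there.

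After the recurrence, I would nail down the base cases. It suffices to check $d_k(n)$ for, say, $n=1$ and $n=2$ against the table/list in Section 2: $d_1(1)=\binom{1}{0}=1$; $d_1(2)=\binom{1}{1}=1$, $d_2(2)=\binom{2}{0}=1$ — wait, this forces a careful check, since $d_2(2)$ should be $1$ (two hexagons split into two singletons) and indeed $\binom{2}{0}=1$; also $d_2(4)=\binom{4}{0}=1$? No: $\binom{n+k-2}{n-k}$ at $n=4,k=2$ is $\binom{4}{2}=6$, matching $d_2(4)=6$, and $d_3(4)=\binom{5}{1}=5$, $d_4(4)=\binom{6}{0}=1$, all consistent. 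So the verification is: confirm the formula on a full "row" $n=1$ and $n=2$ (enough initial data for a two-term recurrence in $n$), confirm the boundary $d_1(0)=\binom{-1}{0}$ — here one must instead simply take $d_1(0)=1$ as a stipulated initial value rather than reading it off the formula, since the binomial convention is awkward at $n=0$; then the recurrence propagates correctness to all $n\geq 1$. For $s_k(n)$, the same recurrence holds by the identical Pascal computation with $f(n,k)=\binom{n+k-3}{n-k}$, and one checks $s_k(1)=0$ (the formula gives $\binom{k-2}{1-k}=0$ for all $k\geq1$) and $s_k(2)=\binom{k-1}{2-k}$, which is $1$ at $k=2$ and $0$ otherwise, matching "two hexagons in different parts" forcing $k=2$.

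The main obstacle is not conceptual but bookkeeping: making sure the binomial-coefficient conventions are handled consistently at the edges of the range (small $n$, $k=1$, $k=n$, and especially $n=0$), so that the single Pascal-rule chain is genuinely valid for every index appearing in the induction rather than only in the "generic interior." Once the convention $\binom{m}{j}=0$ outside $0\le j\le m$ is fixed and the genuinely exceptional initial value $d_1(0)=1$ is declared separately, the proof is just the four applications of Pascal's rule above plus a handful of arithmetic checks, which is why the paper is content to omit the details.
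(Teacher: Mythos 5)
Your proof is correct and follows exactly the route the paper prescribes (and omits): verifying that $\binom{n+k-2}{n-k}$ and $\binom{n+k-3}{n-k}$ satisfy the common recurrence $f(n+1,k)=f(n,k-1)+2f(n,k)-f(n-1,k)$ via Pascal's rule, and checking the rows $n=1,2$ as initial data. Your attention to the binomial conventions at the boundary indices is the right level of care, and the arithmetic checks (e.g.\ $d_2(4)=\binom{4}{2}=6$, $s_k(2)=\binom{k-1}{2-k}$) all come out correctly.
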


Even though sequences $r_k(n)$ and $d_k(n)$ satisfy
different recursive relations and describe different problems, it turns
out that their columns satisfy the same recurrences.
Our next theorem is analogue of Theorem \ref{tm:binom recursion r_k(n)},
but for the sequence $d_k(n)$. We state it without proof.
\begin{theorem}\label{tm:binom recursion d_k(n)}
For $n,k\geq 1$ we have
\begin{equation}
\sum\limits_{j=0}^{2k-1}(-1)^{j}\binom{2k-1}{j}d_{k}(n-j)=0.
\end{equation}
\end{theorem}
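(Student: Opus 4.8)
\textbf{Proof proposal for Theorem \ref{tm:binom recursion d_k(n)}.}
The plan is to mirror the inductive argument used for Theorem \ref{tm:binom recursion r_k(n)}, but starting from the much simpler recurrence \eqref{eq:d(n,k) recursive} instead of \eqref{eq:r_k(n)}. First I would verify the base cases $k=1$ and $k=2$ directly. For $k=1$ the claim reads $d_1(n)-d_1(n-1)=0$, which holds since $d_1(n)=1$ for all $n$. For $k=2$ the claim is $\sum_{j=0}^{3}(-1)^j\binom{3}{j}d_2(n-j)=d_2(n)-3d_2(n-1)+3d_2(n-2)-d_2(n-3)=0$; this follows because $d_2(n)=\binom{n}{n-2}=\binom{n}{2}$ is a quadratic polynomial in $n$, and the third finite difference of any quadratic vanishes (alternatively, verify against the first few table entries). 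One could even invoke Theorem \ref{tm:closed form d(n,k)} and the fact that $d_k(n)=\binom{n+k-2}{n-k}$ is a polynomial of degree $2k-2$ in $n$, so that the $(2k-1)$-st finite difference is identically zero — but to keep the argument self-contained in the spirit of the paper, I would give the inductive proof from the recurrence.

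For the inductive step, assume the identity holds for $k-1$. Starting from \eqref{eq:d(n,k) recursive} I would write down the system of $2k-2$ shifted copies
\[
d_k(n-i)=d_{k-1}(n-1-i)+2d_k(n-1-i)-d_k(n-2-i),\qquad i=0,1,\dots,2k-3,
\]
multiply the $i$-th equation (indexing from $1$, say) by $(-1)^{i}\binom{2k-3}{i-1}$, and sum. Exactly as in the proof of Theorem \ref{tm:binom recursion r_k(n)}, the term $d_k(n-j)$ then collects the coefficient
\[
\binom{2k-3}{j-2}+2\binom{2k-3}{j-1}+\binom{2k-3}{j}=\binom{2k-1}{j}
\]
(up to sign $(-1)^j$), using the Pascal identity applied twice. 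This leaves a residual expression $B_{k-1}(n)$ built entirely from terms $d_{k-1}(n-j)$. Tracking the coefficients, $d_{k-1}(n-j)$ appears only in the $j$-th shifted equation (since the recurrence has a single $d_{k-1}$ term, shifted by one), so
\[
B_{k-1}(n)=\sum_{j=1}^{2k-2}(-1)^{j}\binom{2k-3}{j-1}d_{k-1}(n-1-j+1)=\sum_{j=0}^{2k-3}(-1)^{j-1}\binom{2k-3}{j}d_{k-1}(n-1-j),
\]
which is (up to an overall sign) precisely the $k-1$ instance of the identity evaluated at argument $n-1$, hence zero by the induction hypothesis. Consequently the summed system reduces to $\sum_{j=0}^{2k-1}(-1)^j\binom{2k-1}{j}d_k(n-j)=0$, completing the induction.

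The main obstacle is purely bookkeeping: one must be careful about the index ranges at the two ends ($j$ near $0$ and $j$ near $2k-1$), where the binomial coefficients $\binom{2k-3}{-1}$ and $\binom{2k-3}{2k-2}$ vanish and make the Pascal collapse work at the boundary, and one must confirm that no stray $d_k$ terms survive outside the window $n,n-1,\dots,n-(2k-1)$. Since the $d$-recurrence is strictly simpler than the $r$-recurrence — it has only one lower-index term $d_{k-1}$ and no $d_{k-2}$ term at all — there is a single residual sum $B_{k-1}(n)$ rather than two, so the argument is actually shorter than the one already given for $r_k(n)$; indeed, the paper itself remarks that the columns of both arrays satisfy the same recurrence, and this proof makes that coincidence transparent.
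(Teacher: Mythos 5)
Your argument is correct and is essentially the proof the paper intends: the paper states this theorem without proof, noting only that it is the analogue of Theorem \ref{tm:binom recursion r_k(n)}, and your induction from recurrence (\ref{eq:d(n,k) recursive}) with multipliers $(-1)^{j}\binom{2k-3}{j-1}$ and the double Pascal collapse $\binom{2k-3}{j-2}+2\binom{2k-3}{j-1}+\binom{2k-3}{j}=\binom{2k-1}{j}$ is precisely that analogue, simplified (as you note) to a single residual sum $B_{k-1}$ because the $d$-recurrence has no $d_{k-2}$ term. Your parenthetical alternative --- that $d_k(n)=\binom{n+k-2}{n-k}$ is a polynomial of degree $2k-2$ in $n$, so its $(2k-1)$-st finite difference vanishes --- is also valid and even shorter, subject to the same boundary conventions for small $n$ that the paper itself glosses over.
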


As with a rectangular strip, we are now interested in generating function of
sequence $d_k(n)$. Let
$$G(x,y)=\sum\limits_{n\geq 1}\sum\limits_{k\geq 1}d_k(n)x^ny^k.$$
By recursive relation $\ref{eq:d(n,k) recursive}$ we have
\begin{align*}
G(x,y)&=xy+x^2y\left(1+y\right)+ \sum\limits_{n\geq 3}\sum\limits_{k\geq 1}\left(d_{k-1}(n-1)+2d_k(n-1)-d_k(n-2)\right)x^ny^k \\
&=xy+x^2y\left(1+y\right)+xy\left(G(x,y)-xy\right) +2x \left(G(x,y)-xy\right)-x^2G(x,y),
\end{align*}
so we have
%$$G(x,y)=\dfrac{-x^2y^2+x^2y\left(y-1\right)+xy}{1-(2+y)x+x^2}.$$
$$G(x,y)=\dfrac{xy(1+x(y-1)-xy)}{1-(2+y)x+x^2}.$$
By putting $y=1$ we obtain the univariate generating function for the sequence
$d(n)$ as
$$G(x,1)=\dfrac{x-x^2}{1-3x+x^2}.$$
Its smallest-modulus singularity is $\omega=\frac{1}{2}\left(3-\sqrt{5}\right)$
and this gives us the asymptotics of the expected number of pieces in random
divisions of honeycomb strips of a given length.

\begin{theorem}\label{exp_hex}
The expected number of pieces in a random division of a honeycomb strip of
length $n$ asymptotically behaves as
$$\frac{\sqrt{5}}{5}n \approx 0.447214 n.$$
\end{theorem}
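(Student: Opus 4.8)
The plan is to mimic exactly the Darboux-theorem computation already carried out for the chocolate bar, now applied to $G(x,y)$. The expected number of pieces in a random division of a honeycomb strip of length $n$ is the ratio
$$
E_n = \frac{\sum_{k\ge 1} k\, d_k(n)}{\sum_{k\ge 1} d_k(n)}
= \frac{[x^n]\,\partial_y G(x,y)\big|_{y=1}}{[x^n]\,G(x,1)},
$$
so I would extract the $n$-th coefficient asymptotics of the numerator and denominator separately via Theorem \ref{tm:darboux}.

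First I would record $G(x,1) = \dfrac{x - x^2}{1 - 3x + x^2}$ and note that its denominator factors as $1 - 3x + x^2 = (1 - x/\omega)(1 - x/\bar\omega)$ with $\omega = \tfrac12(3-\sqrt5)$ the smaller-modulus root and $\bar\omega = \tfrac12(3+\sqrt5)$. Writing $G(x,1) = \bigl(1 - x/\omega\bigr)^{-1} h(x)$ with $h(x) = \dfrac{x-x^2}{1 - x/\bar\omega}$ puts us in the form required by Darboux with $\alpha = -1$, and I would evaluate $h(\omega)$. Next I would compute $\partial_y G(x,y)\big|_{y=1}$ by differentiating the closed form for $G(x,y)$; since the denominator $1 - (2+y)x + x^2$ at $y=1$ is again $1 - 3x + x^2$, but now appears squared after differentiating the quotient, the derivative has the shape $\bigl(1 - x/\omega\bigr)^{-2} g(x)$ for an explicit rational $g$ analytic at $\omega$, so Darboux applies with $\alpha = -2$. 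Evaluating $g(\omega)$, the asymptotics give numerator $\sim \dfrac{g(\omega)\, n}{\Gamma(2)\,\omega^n}$ and denominator $\sim \dfrac{h(\omega)}{\Gamma(1)\,\omega^n}$, whose ratio is $\dfrac{g(\omega)}{h(\omega)}\,n$. The claim is that this simplifies to $\dfrac{\sqrt5}{5}\,n$.

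The only real obstacle is arithmetic bookkeeping: correctly differentiating the two-variable rational function, substituting $y=1$, factoring out the singular part to identify $g$ and $h$ cleanly, and then simplifying $g(\omega)/h(\omega)$ using $\omega^2 = 3\omega - 1$ and $\sqrt5 = 3 - 2\omega$. None of this is conceptually hard — it is the same routine as for $F(x,y)$ in Section 3 — so I would present the key intermediate values ($h(\omega)$, $g(\omega)$) and let the reader verify the final simplification, exactly as the paper does in the chocolate-bar case. One should also remark, as before, that the result holds under the equilibrium assumption that all divisions are equally likely.

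Alternatively, and perhaps more cleanly, one could bypass generating functions entirely: by Theorem \ref{odd_fibo} the denominator is $d(n) = F_{2n-1}$, and one can obtain a closed form for the numerator $N(n) = \sum_k k\, d_k(n)$ either from the explicit formula $d_k(n) = \binom{n+k-2}{n-k}$ of Theorem \ref{tm:closed form d(n,k)} or by setting up a linear recurrence for $N(n)$ directly from \eqref{eq:d(n,k) recursive}. Then $E_n = N(n)/F_{2n-1}$, and using $F_{2n-1} \sim \varphi^{2n-1}/\sqrt5$ with $\varphi = (1+\sqrt5)/2$ together with the leading term of $N(n)$ yields the stated $\frac{\sqrt5}{5}n$. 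I would still prefer the Darboux route for uniformity with Section 3, keeping this as a sanity check on the constant.
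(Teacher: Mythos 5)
Your proposal is correct and is exactly the route the paper takes (and omits the details of): apply Darboux to $G(x,1)=\frac{x-x^2}{1-3x+x^2}$ and to $\partial_y G|_{y=1}=\frac{x(1-x)^3}{(1-3x+x^2)^2}$ at $\omega=\frac{3-\sqrt5}{2}$, giving $h(\omega)=\frac{5-\sqrt5}{10}$, $g(\omega)=\frac{\sqrt5-1}{10}$, and ratio $\frac{\sqrt5}{5}n$. The arithmetic checks out, so nothing is missing.
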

The proof follows by a straightforward application of Darboux theorem and
we omit the details.

%To investigate asymptotic behavior of the enumerating sequence we use Darboux's theorem \cite{bender} again.
%
%For honeycomb strip we have $\omega=\frac{1}{2}\left(3-\sqrt{5}\right)$ and $$G(x,1)=\dfrac{2x-2x^2}{x\left(\sqrt{5}-3\right)+2}\left(1-\dfrac{x}{\frac{1}{2}\left(3-\sqrt{5}\right)}\right)^{-1},$$ we have $h(x)=\dfrac{2x-2x^2}{x\left(\sqrt{5}-3\right)+2}$ and $h(\omega)=\frac{5-\sqrt{5}}{10}$. Furthermore, $$\left.\frac{\partial G(x,y)}{\partial y}\right\rvert_{y=1}=\frac{4x(1-x)^3}{\left(x\left(\sqrt{5}-3\right)x+2\right)^2}\left(1-\dfrac{x}{\frac{1}{2}\left(3-\sqrt{5}\right)}\right)^{-2}$$ yields $g(x)=\frac{4x(1-x)^3}{\left(x\left(\sqrt{5}-3\right)x+2\right)^2}$ and $g(\omega)=\frac{-1+\sqrt{5}}{10}$. By Theorem \ref{tm:darboux}, expected number of parts is $$\dfrac{\dfrac{g(\omega)n}{\Gamma(2)\omega^n}}{\dfrac{h(\omega)}{\Gamma(1)\omega^n}}=\frac{\sqrt{5}}{5}n.$$

\subsection{Some consequences}

Our results make possible to give new combinatorial interpretation for
some famous identities. We present two such cases.

First, by double counting the set $D(n)$, we gave
new meaning to the well-known identity
$$\sum_{k=1}^n\binom{n+k-2}{n-k}=F_{2n-1}.$$ 
Another identity will be proven in the next theorem.
\begin{theorem}
For $n,m\geq 1$ we have $$F_{2n+2m-1}=F_{2n-1}F_{2m-1}+F_{2n}F_{2m}.$$
\end{theorem}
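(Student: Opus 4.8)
The plan is to give a combinatorial proof by decomposing divisions of a honeycomb strip of length $n+m$ according to how the division behaves across the "seam" between hexagon $n$ and hexagon $n+1$, mirroring the $s_k(n)$/$t_k(n)$ split that was already used above. Recall from Theorem \ref{odd_fibo} that $d(n)=F_{2n-1}$ counts all divisions of a strip of length $n$, and from the discussion following Theorem \ref{tm:recursive s(n+1,k)} that $s(n)=F_{2n-2}$ counts those divisions in which the two rightmost hexagons lie in different parts. I would first observe that $F_{2n} = d(n)+s(n) = F_{2n-1}+F_{2n-2}$, so $F_{2n}$ counts divisions of a strip of length $n$ together with a marked "status" — or more usefully, $F_{2n}$ counts divisions of a strip of length $n+1$ in which hexagons $n$ and $n+1$ are \emph{not} both forced to behave in a prescribed way; I will pin down the cleanest such interpretation in the course of the argument, most likely "$F_{2n}$ is the number of divisions of a strip of length $n$ in which we additionally allow the last hexagon to be, or not be, glued leftward," which is exactly $d(n) + s(n)$.

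The core step: take a division $\mathcal{D}$ of the strip $H_{n+m}$ of length $n+m$. The hexagons $1,\dots,n$ form a sub-strip $H_n$ (its rightmost two cells being hexagons $n-1,n$) and hexagons $n+1,\dots,n+m$ form a sub-strip $H_m'$ whose leftmost cell is hexagon $n+1$. Since in the honeycomb inner-dual graph hexagon $n$ is adjacent only to $n-1$ and $n+1$, and $n+1$ is adjacent only to $n$ and $n+2$, the only way the two halves can "communicate" is through the single edge $\{n,n+1\}$. Hence every part of $\mathcal{D}$ lies entirely in $H_n$, entirely in $H_m'$, or is a part containing both $n$ and $n+1$ (and nothing else bridging the seam). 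This gives a clean dichotomy: either (a) the cut along edge $\{n,n+1\}$ separates $\mathcal{D}$ into a division of $H_n$ and a division of $H_m'$, contributing $d(n)\,d(m) = F_{2n-1}F_{2m-1}$; or (b) hexagons $n$ and $n+1$ share a part $P$. In case (b), $P\setminus\{n+1\}$ together with the rest of $H_n$'s parts is a division of $H_n$ in which hexagon $n$'s part is "open to the right," and symmetrically for $H_m'$; I need to count these.

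To count case (b) I would argue that a division of $H_{n+m}$ in which $n$ and $n+1$ are together corresponds bijectively to a pair consisting of: a division of $H_n$ \emph{together with the information of which part contains hexagon }$n$ — equivalently, by the $t_k(n)=d_k(n-1)$ idea, to a division of $H_{n-1}$-plus-a-growing-part — and similarly on the right. The cleanest bookkeeping is: such divisions are in bijection with pairs (division of $H_n$, division of $H_m'$) \emph{modulo} merging the part of $n$ with the part of $n+1$, and the count of "left pieces" is precisely $F_{2n}$ (= divisions of $H_n$ where hexagon $n$'s part may or may not already extend to its left neighbor, matching $d(n)+s(n)$), with the right side contributing $F_{2m}$. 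I expect the main obstacle to be nailing down this last bijection so that the factor is exactly $F_{2n}F_{2m}$ and not, say, off by a boundary term: the subtlety is whether the "glued" part is allowed to be a single pair $\{n,n+1\}$ versus a longer part reaching further into each half, and one must check the $m=1$ or $n=1$ edge cases (where $s(1)=0=F_0$ keeps the formula honest). Once the bijection in (b) is set up correctly, adding the two cases gives $F_{2n+2m-1}=d(n+m)=F_{2n-1}F_{2m-1}+F_{2n}F_{2m}$, completing the proof; as a sanity check one can also verify the identity directly from the closed form $F_k=(\varphi^k-\psi^k)/\sqrt5$, but the point here is the combinatorial content.
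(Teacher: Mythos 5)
Your overall strategy --- split a division of the length-$(n+m)$ strip at the seam between hexagons $n$ and $n+1$ and use the $s/t$ refinement to make $F_{2n}=d(n)+s(n)$ appear --- is the same double-counting idea the paper uses. But your core step rests on a false premise. You assert that hexagon $n$ is adjacent only to $n-1$ and $n+1$, so that the two halves communicate only through the single edge $\{n,n+1\}$. In fact the inner dual of a honeycomb strip is $P_n^2$ (see the paper's last theorem of Section 4 and the $n=4$ example, where $\{13,24\}$ is a legal division): hexagon $n$ is adjacent to $n+1$ \emph{and} $n+2$, and hexagon $n-1$ is adjacent to $n+1$. So \emph{three} edges cross the seam, and your dichotomy (a)/(b) is wrong --- for instance, a part may contain $n-1$ and $n+1$ but not $n$. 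Your single-edge model is also numerically refuted: it would give the merged case a count of $d(n)d(m)$ (merge the part of $n$ with the part of $n+1$ in an arbitrary pair of divisions), hence $d(n+m)=2F_{2n-1}F_{2m-1}$, which already fails for $n=2$, $m=1$ ($4\neq F_5=5$).

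The three seam edges are precisely what makes the correct count of the merged case come out to $F_{2n}F_{2m}$ rather than $F_{2n-1}F_{2m-1}$, and this is exactly the step you flag as "the main obstacle" and leave unresolved. The paper resolves it by conditioning on the state ($S$ or $T$) of the two boundary hexagons on \emph{each} side of the seam: if both halves are in state $T$ there is $1$ way to merge, giving $t(n)t(m)$; if both are in state $S$ there are $4$ ways, giving $4s(n)s(m)$; mixed states give $2s(n)t(m)+2t(n)s(m)$. Summing, the merged contribution is
\begin{equation*}
\bigl(2s(n)+t(n)\bigr)\bigl(2s(m)+t(m)\bigr)=\bigl(s(n)+d(n)\bigr)\bigl(s(m)+d(m)\bigr)=F_{2n}F_{2m},
\end{equation*}
using $d=s+t$, $d(n)=F_{2n-1}$, $s(n)=F_{2n-2}$. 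So your identification of $F_{2n}$ with $d(n)+s(n)$ is the right target, but to reach it you must first correct the adjacency structure at the seam and then carry out this explicit enumeration of merge patterns; without it the proof is incomplete.
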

\begin{proof}
We start with two honeycomb strips of lengths $n$ and $m$. To prove the
statement of a theorem we glue strips together as in
Figure \ref{fig:glued strips} and double count the number of divisions.
On one hand, we have a strip of length $n+m$ whose number of divisions is
$d(n+m)$. On the other hand, we consider what can happen when strips are
glued together. In the first case parts of each division do not interact,
hence we have $d(n)d(m)$ such divisions. In the other cases, at least two
parts, one from each strip, must merge. But to correctly count the number
of divisions in those cases, it is important to know whether division is
with two last hexagons together or separated. If both strips have the last
two hexagons hexagons together, the total number of such divisions is
$t(n)t(m)$. If both strips have last two hexagons hexagons separated,
the total number of such divisions is $4s(n)s(m)$, since there are four
different ways to merge parts. Finally, if one strip has two last hexagons
separated and the other one together, we can merge parts in two ways.
Since either one of strips can be in both situations, the total number of
divisions in this case is $2s(n)t(m)+2s(n)t(m)$.
\begin{figure}[ht]
\centering

\begin{tikzpicture}[scale=0.55]

\draw [line width=0.25mm] (A)--(B)--(C)--(D)--(E)--(F)--(A); 
\draw [line width=0.25mm] (G)--(H)--(I)--(J)--(K)--(L)--(G); 

\draw [line width=0.25mm] (A2)--(B2)--(C2)--(D2)--(E2)--(F2)--(A2); 
\draw [line width=0.25mm] (A3)--(B3)--(C3)--(D3)--(E3)--(F3)--(A3); 
\draw [line width=0.25mm] (G2)--(H2)--(I2)--(J2)--(K2)--(L2)--(G2); 
\draw [line width=0.25mm] (G3)--(H3)--(I3)--(J3)--(K3)--(L3)--(G3); 

\draw [line width=0.25mm] (A5)--(B5)--(C5)--(D5)--(E5)--(F5)--(A5); 
\draw [line width=0.25mm] (G5)--(H5)--(I5)--(J5)--(K5)--(L5)--(G5);

\node[] (p) at ({sqrt(3)},0) {$2$};
\node[] (p) at ({3*sqrt(3)},0) {$n$};
\node[] (p) at ({4*sqrt(3)},0) {$m-1$};
\node[] (p) at ({6*sqrt(3)},0) {$1$};

\node[] (p) at ({sqrt(3)/2},-1.5) {$1$};
\node[] (p) at ({3.55*sqrt(3)/2},-0.75) {$\cdots$};
\node[] (p) at ({5*sqrt(3)/2},-1.5) {$n-1$};
\node[] (p) at ({7*sqrt(3)/2},-1.5) {$m$};
\node[] (p) at ({9.55*sqrt(3)/2},-0.75) {$\cdots$};
\node[] (p) at ({11*sqrt(3)/2},-1.5) {$2$};
\end{tikzpicture}  
\caption{Two honeycomb strips glued together.} \label{fig:glued strips}
\end{figure}
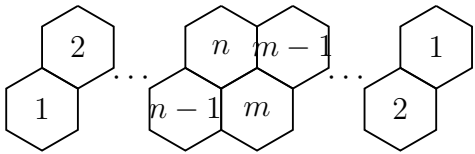 

So, \begin{align*}
d(n+m)&=d(n)d(m)+4s(n)s(m)+t(n)t(m)+2s(n)t(m)+2s(n)t(m).
\end{align*}
The claim now follows by substituting $d(n)=F_{2n-1}$, $s(n)=F_{2n-2}$ and
$t(n)=F_{2n-3}$ and rearranging the resulting expressions.
%Since, $d(n)=F_{2n-1}$, $s(n)=F_{2n-2}$ and $t(n)=F_{2n-3}$, we obtain
%\begin{align*}
%F_{2n+2m-1}&=F_{2n-1}F_{2m-1}+4F_{2n-2}F_{2m-2}+F_{2n-3}F_{2m-3}+2F_{2n-2}F_{2m-3}+2F_{2n-3}F_{2m-2} \\
%&=F_{2n-1}F_{2m-1}+2F_{2n-2}F_{2m-2}+F_{2n-3}F_{2m-3}+2F_{2n-2}F_{2m-1}+2F_{2n-3}F_{2m-2}\\
%&=F_{2n-1}F_{2m-1}+2F_{2n-2}F_{2m-2}+F_{2n-3}F_{2m-1}+2F_{2n-2}F_{2m-1}+F_{2n-3}F_{2m-2}\\
%&=F_{2n-1}F_{2m-1}+2F_{2n-2}F_{2m-2}+F_{2n-1}F_{2m-1}+F_{2n-2}F_{2m-1}+F_{2n-3}F_{2m-2}\\
%&=F_{2n-1}F_{2m-1}+F_{2n-2}F_{2m-2}+F_{2n-1}F_{2m-1}+F_{2n-2}F_{2m-1}+F_{2n-1}F_{2m-2}\\
%&=F_{2n-1}F_{2m-1}+F_{2n}F_{2m-2}+F_{2n-1}F_{2m-1}+F_{2n-2}F_{2m-1}\\
%&=F_{2n-1}F_{2m-1}+F_{2n}F_{2m-2}+F_{2n}F_{2m-1}\\
%&=F_{2n-1}F_{2m-1}+F_{2n}F_{2m}\\
%\end{align*}
\end{proof}

The results of this section can be also formulated in terms of graph
compositions, this time of the graph $P_n^2$ obtained by adding edges 
between all pairs of vertices at distance $2$ in $P_n$, the path on $n$
vertices. The following results is a direct consequence of the fact that
$P_n^2$ is the inner dual of a honeycomb strip of length $n$.
\begin{theorem}
The number of compositions of $P_n^2$ with $k$ components is equal to
$\binom{n+k-2}{n-2}$. The total number of compositions of $P_n^2$ is equal
to $F_{2n-1}$.
\end{theorem}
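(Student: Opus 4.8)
The plan is to derive both assertions from the honeycomb-strip results already established, using the identification of $P_n^2$ with the inner dual of a honeycomb strip of length $n$. First I would spell out that identification: labelling the $n$ hexagons $1,\dots,n$ in the order of Figure~\ref{fig:grid1}, two hexagons share an edge exactly when their indices differ by $1$ or $2$, so the inner dual has vertex set $\{1,\dots,n\}$ and edge set $\{\{i,i+1\}:1\le i\le n-1\}\cup\{\{i,i+2\}:1\le i\le n-2\}$, which is precisely $P_n^2$.

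Next I would exhibit a bijection, preserving the number of parts, between the compositions of $P_n^2$ and the divisions of the honeycomb strip of length $n$. Recall that a composition of a graph is a partition of its vertex set into blocks (the \emph{components} of the composition) each inducing a connected subgraph, whereas a division of the strip is a partition of its hexagons into pieces, each piece being a planar region bounded by hexagon edges. The natural candidate sends a division to the partition of $\{1,\dots,n\}$ cut out by its pieces; since it visibly preserves the number of parts, it remains only to check that it is well defined and invertible. Well-definedness says that a union of hexagons that is connected as a planar region induces a connected subgraph of $P_n^2$; invertibility says the converse, that every vertex subset inducing a connected subgraph of $P_n^2$ is realised as a single planar region after cutting along the separating edges. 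Granting this, the number of compositions of $P_n^2$ with $k$ components equals $d_k(n)$, so the closed form follows at once from Theorem~\ref{tm:closed form d(n,k)}, and the total, obtained by summing over $k$, equals $F_{2n-1}$ by Theorem~\ref{odd_fibo}.

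The one point deserving genuine care, and the place where I expect the only (minor) obstacle, is the planar-connectivity equivalence invoked above. In an arbitrary polyhex two faces may touch at a single vertex without sharing an edge, so that their union is disconnected as a region even though a naive adjacency graph would join them; one must therefore observe that in a linear honeycomb strip this degenerate situation never arises — any two hexagons either share a full edge or are disjoint — and that the strip, being a simply connected polyhex, admits no piece with a hole. With those two observations in place, planar edge-connectivity of a set of hexagons is exactly connectivity of the corresponding induced subgraph of the inner dual, the bijection is legitimate, and the theorem reduces to the bookkeeping already carried out above.
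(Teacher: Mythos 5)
Your proposal is correct and follows the paper's route exactly: the paper offers no proof beyond observing that $P_n^2$ is the inner dual of the honeycomb strip of length $n$, and your argument simply makes that identification and the resulting part-preserving bijection explicit before invoking Theorem~\ref{tm:closed form d(n,k)} and Theorem~\ref{odd_fibo}. One point worth flagging: what your argument actually delivers for the $k$-component count is $d_k(n)=\binom{n+k-2}{n-k}$, not the $\binom{n+k-2}{n-2}$ printed in the statement; these disagree already at $n=4$, $k=3$ (where $d_3(4)=5$ but $\binom{5}{2}=10$), so the printed exponent appears to be a typo and your version is the correct one.
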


\section{Transfer matrix method}

\subsection{Honeycomb strips}

In this section we present another approach to obtain overall number
of divisions, the one based on transfer matrices. It might seem less
natural than recurrence relations, but it often turns out to be suitable
when recurrence relations are complicated or unknown.

We again consider a honeycomb strip such as the one shown in the
Figure \ref{fig:grid1}, and look at its rightmost column, i.e., at the
hexagons labeled by $n-1$ and $n$. There are two possible situations 
regarding these hexagons: they can be in the same piece of a subdivision,
or they can belong to two different pieces. We denote a strip with last
two hexagons together as a type $T$ strip and a strip with last two
hexagons separated as a type $S$ strip. Adding the $(n+1)-$st hexagon might
result again in a type $S$ strip or in a type $T$ strip. There are 
altogether four possibilities, each of them producing certain effects
on the number of pieces in the resulting strip. For example, if we start
with a strip of type $S$ and we want to end with a strip of type $S$,
we can either add the new hexagon to the part which contains the $(n-1)^{\textup{st}}$
hexagon, or we can let the $(n+1)^{\textup{st}}$ hexagon to form its own part. In the
first case, the number of parts will remain the same, in the second case
it will increase by one.
Figure \ref{fig:columns} shows this case.
\begin{figure}[ht]
\centering \begin{tikzpicture}[scale=0.55]
\draw [line width=0.25mm,fill=green] (A1)--(B1)--(C1)--(D1)--(E1)--(F1)--(A1);
\draw [line width=0.25mm,fill=yellow] (G1)--(H1)--(I1)--(J1)--(K1)--(L1)--(G1);
\draw [line width=0.25mm,fill=orange] (G2)--(H2)--(I2)--(J2)--(K2)--(L2)--(G2);
\node[font=\small] (p) at ({2*sqrt(3)},0) {$n$};
\node[font=\small] (p) at ({5*sqrt(3)/2},-1.5) {$n+1$};
\node[font=\small] (p) at ({3*sqrt(3)/2},-1.5) {$n-1$};
\end{tikzpicture} \begin{tikzpicture}[scale=0.55]
\draw [line width=0.25mm,fill=green] (A1)--(B1)--(C1)--(D1)--(E1)--(F1)--(A1);
\draw [line width=0.25mm,fill=yellow] (G1)--(H1)--(I1)--(J1)--(K1)--(J2)--(K2)--(L2)--(G2)--(H2)--(G1);
%\draw [line width=0.25mm,fill=orange] (G2)--(H2)--(I2)--(J2)--(K2)--(L2)--(G2);
\node[font=\small] (p) at ({2*sqrt(3)},0) {$n$};
\node[font=\small] (p) at ({5*sqrt(3)/2},-1.5) {$n+1$};
\node[font=\small] (p) at ({3*sqrt(3)/2},-1.5) {$n-1$};
\draw[dashed,opacity=0.4] (H2)--(I2);
\end{tikzpicture} \label{fig:columns}
\caption{Both cases resulting with a strip of type $S$.}
\end{figure}
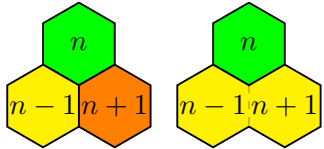
The main idea of the transfer matrix method is to
arrange the effects of adding a single hexagon into a $2 \times 2$ matrix 
whose entries will keep track of the number of pieces via a formal
variable, say, $y$. The rows and columns of such a matrix are indexed by
possible states, in our case $T$ and $S$, and the element at the position 
$S,S$ in our example will be $1+y$. That clearly captures the fact that
transfer from $S$ to $S$ results either in the same number of pieces, or
the number of pieces increases by one. The other three possible transitions,
$T \to T$, $S \to T$ and $T \to S$ are described by matrix elements
$1, 1$, and $y$, respectively. Indeed, it is clear that adding a hexagon
so to obtain the rightmost column together cannot increase the number of
pieces, hence the two ones, and that starting from $T$ and arriving at $S$
is possible only by the last hexagon forming a new piece, hence increasing
the number of pieces by one, hence $y$. If we denote our matrix by $H$,
we can write it as
\begin{equation*}
H(y)=\begin{bmatrix}
1 & 1 \\ y & 1+y 
\end{bmatrix}.
\end{equation*}
By construction, it is clear that adding a new hexagon will be well
described by multiplying some vector of states by our matrix $H(y)$, and that
repeated addition of hexagons will correspond to multiplication by powers
of $H(y)$. It remains to account for the initial conditions.
%So, let $d(n)$ denote the number of all divisions
%of a strip containing $n$ hexagons. 
%We wish to prove that $d(n)=F_{2n-1}$
%where $F_n$ denotes $n-$th Fibonacci number. We form a transfer matrix to
%obtain generating function where exponent of $x$ denotes the length of a
%strip and exponent of $y$ number of parts. 
For the initial value $n=1$ we
have trivial case, one hexagon forms one part. For $n=2$ we have two
possibilities, hexagons are in the same part or separated. Hence his case
is represented by a vector
\begin{equation*}
\overrightarrow{h_2}=x\begin{bmatrix} y \\ y^2 \end{bmatrix}.
\end{equation*}
By introducing another formal variable, say $x$, to keep track of the length,
the above procedure will produce a sequence of bivariate polynomials whose
coefficients are our numbers $d_k(n)$. A first few polynomials are shown
in Table \ref{table:matrix} after the theorem which summarizes the described
procedure.
\begin{theorem} \label{tm:matrix}
The number of divisions of a honeycomb strip of a length $n$ into $k$ parts is the coefficient by $x^ny^k$ in the expression
\begin{equation}\label{expression}
\begin{bmatrix} 1& 1 \end{bmatrix} \begin{bmatrix}
1 & 1 \\ y & 1+y \end{bmatrix}^{n-2} \begin{bmatrix}
y \\ y^2 \end{bmatrix} x^n.
\end{equation}  
\end{theorem}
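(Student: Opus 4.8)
The plan is to justify the transfer-matrix formula \eqref{expression} by identifying the two coordinates of the vector produced after $n-2$ multiplications with the generating functions of type-$S$ and type-$T$ strips of length $n$, graded by the number of pieces. Concretely, for $n \geq 2$ set
\begin{equation*}
\overrightarrow{h_n} = \begin{bmatrix} P_n(y) \\ Q_n(y) \end{bmatrix} x^n, \qquad
\overrightarrow{h_n} = H(y)^{\,n-2}\,\overrightarrow{h_2},
\end{equation*}
and claim that $P_n(y) = \sum_{k} t_k(n) y^k$ and $Q_n(y) = \sum_k s_k(n) y^k$, so that the row vector $\begin{bmatrix} 1 & 1\end{bmatrix}$ applied on the left extracts $\sum_k (t_k(n)+s_k(n)) y^k = \sum_k d_k(n) y^k$, which is exactly the claim once the factor $x^n$ is taken into account.

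The argument proceeds by induction on $n$. For the base case $n=2$, a honeycomb strip of two hexagons has exactly one division with the two hexagons together (one piece, contributing $y$) and exactly one with them separated (two pieces, contributing $y^2$); this matches $\overrightarrow{h_2} = x\begin{bmatrix} y \\ y^2\end{bmatrix}$ since $t_1(2)=1$, $s_2(2)=1$, and all other $t_k(2), s_k(2)$ vanish. For the inductive step, I would appeal to the case analysis already carried out in the text preceding the theorem: adding the $(n+1)^{\textup{st}}$ hexagon to a type-$T$ strip of length $n$ yields either a type-$T$ strip with the same number of pieces (attach the new hexagon to the part containing hexagon $n$) or a type-$S$ strip with one more piece (let it form its own part); adding it to a type-$S$ strip of length $n$ yields either a type-$T$ strip with the same number of pieces, or a type-$S$ strip which either keeps the piece count (attach to the part of hexagon $n-1$) or increases it by one (new singleton). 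Tabulating the $y$-bookkeeping, $T\to T$ contributes $1$, $T\to S$ contributes $y$, $S\to T$ contributes $1$, and $S\to S$ contributes $1+y$; these are precisely the entries of $H(y)$, and each transition multiplies the length-tracking variable by one factor of $x$. Hence $\overrightarrow{h_{n+1}} = x\,H(y)\,\overrightarrow{h_n}$, i.e. $\overrightarrow{h_{n+1}} = H(y)^{(n+1)-2}\overrightarrow{h_2}\, x$, completing the induction. Summing the two components then gives $d(n) = \begin{bmatrix}1 & 1\end{bmatrix}\overrightarrow{h_n}$ coefficientwise in $y$, which is \eqref{expression}.

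The only genuine subtlety — and the step I would be most careful about — is verifying that the enumeration of transitions is both \emph{exhaustive} and \emph{non-overlapping}, i.e. that every division of the $(n+1)$-hexagon strip arises from exactly one division of the $n$-hexagon strip together with exactly one of the four listed moves. This is essentially the same bookkeeping that underlies Theorems \ref{tm:recursive s(n+1,k)} and \ref{tm:recursive d(n+1,k)}, so one could alternatively bypass the direct combinatorial check entirely: it suffices to observe that the characteristic polynomial of $H(y)$ is $\lambda^2 - (2+y)\lambda + 1$, so the components of $\overrightarrow{h_n}$ satisfy the recurrence $a(n+1) = (2+y)a(n)\cdot x - a(n-1)\cdot x^2$ in the bivariate ring, which matches recurrence \eqref{eq:d(n,k) recursive} after extracting coefficients of $y^k$; together with agreement of the two initial vectors $\overrightarrow{h_2}$ and $x\,H(y)\,\overrightarrow{h_1}$-compatible data, this forces $\begin{bmatrix}1&1\end{bmatrix}\overrightarrow{h_n} = \sum_k d_k(n) y^k\, x^n$ by uniqueness of solutions to a second-order linear recurrence. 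I would present the combinatorial transition argument as the main line and mention the Cayley–Hamilton shortcut as a remark for readers who prefer to reduce everything to the already-established recurrence.
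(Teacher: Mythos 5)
Your proposal is correct and follows essentially the same route as the paper: the theorem is justified there by exactly the combinatorial transition analysis you describe (states $T$ and $S$, the four moves with weights $1$, $y$, $1$, $1+y$, and the initial vector $x\begin{bmatrix} y\\ y^2\end{bmatrix}$), which the theorem merely summarizes. Your identification of the two vector components with $\sum_k t_k(n)y^k$ and $\sum_k s_k(n)y^k$, and the optional Cayley--Hamilton reduction to recurrence \eqref{eq:d(n,k) recursive}, make the argument more explicit than the paper's informal presentation but do not change its substance.
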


\begin{table}[ht]\centering \label{table:matrix}
\begin{tabular}{c|l}
$n$ & \\
\hline
$1$ & $xy$\\ 
$2$ & $x^2(y+y^2)$\\ 
$3$ & $x^3(y+3y^2+y^3)$\\ 
$4$ & $x^4(y+6y^2+5y^3+y^4)$\\ 
$5$ & $x^5(y+10y^2+15y^3+7y^4+y^5)$\\
$6$ & $x^6(y+15y^2+35y^3+28y^4+9y^5+y^6)$\\
\end{tabular} \caption{First few bivariate polynomials from the transfer
matrix method. }
\end{table}

The coefficients by $x^ny^k$ in expression (\ref{expression}) could be now
determined by studying powers of the transfer matrix. By looking at the
first few cases,
$$H(y)^2=\begin{bmatrix}
1+y & 2+y \\ 2y+y^2 & 1+3y+y^2\end{bmatrix} \quad {\rm and}\quad  H(y)^3=\begin{bmatrix}
1+3y+y^2 & 3+4y+y^2 \\ 3y+4y^2+y^3 & 1+6y+5y^2+y^3\end{bmatrix},$$
we could guess the entries in the general case and then verify them by
induction. We state the result omitting the details of the proof.
% we have to express matrix $H^n$ in more suitable way. We have $$H^2=\begin{bmatrix}
%1+y & 2+y \\ 2y+y^2 & 1+3y+y^2\end{bmatrix}$$ and $$H^3=\begin{bmatrix}
%1+3y+y^2 & 3+4y+y^2 \\ 3y+4y^2+y^3 & 1+6y+5y^2+y^3\end{bmatrix},$$ and for the form of matrix $H^n$ we have the following lemma:
\begin{lemma} \label{lm:matrix xy}
Matrix $$H(y)^n=\begin{bmatrix}
p(n) & s(n) \\ ys(n) & p(n+1)
\end{bmatrix}$$ with $p(n)=\displaystyle\sum\limits_{k=1}^{n}\binom{n+k-2}{n-k}y^{k-1}$ and $s(n)=\displaystyle\sum\limits_{k=1}^{n}\binom{n+k-1}{n-k}y^{k-1}$.
\end{lemma}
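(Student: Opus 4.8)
The plan is to proceed by induction on $n$, verifying the base case directly and then pushing the claimed form through one multiplication by $H(y)$. For the base case $n=1$ we have $p(1) = \binom{0}{0}y^0 = 1$ and $s(1) = \binom{1}{0}y^0 = 1$, so the claimed matrix is $\begin{bmatrix} 1 & 1 \\ y & p(2) \end{bmatrix}$ with $p(2) = \binom{1}{1}y^0 + \binom{2}{0}y^1 = 1+y$, which matches $H(y)$ itself. (It is worth also spot-checking $n=2,3$ against the displayed powers $H(y)^2, H(y)^3$ to be safe, since the off-diagonal entries are governed by $s(n)$ while the diagonal ones by $p(n)$ and $p(n+1)$, and the indexing must line up.)

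For the inductive step, I would write $H(y)^{n+1} = H(y)^n \cdot H(y)$ and compute the four entries. Using the induction hypothesis,
\begin{equation*}
\begin{bmatrix} p(n) & s(n) \\ ys(n) & p(n+1) \end{bmatrix}\begin{bmatrix} 1 & 1 \\ y & 1+y \end{bmatrix} = \begin{bmatrix} p(n)+ys(n) & p(n)+(1+y)s(n) \\ ys(n)+yp(n+1) & ys(n)+(1+y)p(n+1) \end{bmatrix}.
\end{equation*}
So the lemma reduces to the two scalar identities
\begin{align*}
p(n)+y\,s(n) &= p(n+1),\\
p(n)+(1+y)\,s(n) &= s(n+1),
\end{align*}
together with the consistency check that the $(2,1)$ and $(2,2)$ entries are then automatically $y\,s(n+1)$ and $p(n+2)$; the former is $y(p(n)+(1+y)s(n)) = y\,s(n+1)$, and the latter follows from the first identity applied at $n+1$. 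Both scalar identities are, after extracting the coefficient of $y^{k-1}$, standard Pascal-type recurrences. For the first, comparing coefficients of $y^{k-1}$ gives $\binom{n+k-2}{n-k} + \binom{n+k-2}{n-k+1} = \binom{n+k-1}{n-k+1}$ (matching $p(n+1) = \sum_k \binom{n+k-1}{n+1-k}y^{k-1}$), which is exactly Pascal's rule $\binom{a}{b}+\binom{a}{b+1} = \binom{a+1}{b+1}$ with $a = n+k-2$, $b = n-k$. The second identity, $p(n) + s(n) + y\,s(n) = s(n+1)$, similarly unpacks via one or two applications of Pascal's rule on the binomials $\binom{n+k-2}{n-k}$, $\binom{n+k-1}{n-k}$, $\binom{n+k-1}{n-k+1}$ against $\binom{n+k}{n+1-k}$; one should just be careful that the observation $d_k(n) = \binom{n+k-2}{n-k}$ from Theorem \ref{tm:closed form d(n,k)} makes $p(n)$ the natural generating polynomial, and that $s(n)$ here is a shifted binomial sum, not the $s_k(n)$ of Theorem \ref{tm:closed form d(n,k)} (a brief remark disambiguating the two uses of the letter $s$ would help the reader).

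The only real obstacle is bookkeeping: getting the index ranges in the sums right at the endpoints $k=1$ and $k=n+1$, where some binomials vanish or the summation limits shift, and making sure the two scalar identities are stated with the correct shifts so that Pascal's rule applies cleanly. There is no conceptual difficulty — once the reduction to the two identities $p(n)+ys(n)=p(n+1)$ and $p(n)+(1+y)s(n)=s(n+1)$ is in place, everything is a one-line binomial manipulation — so I would present the reduction carefully and then simply cite Pascal's rule, leaving the coefficient extraction to the reader as the text already signals ("omitting the details of the proof").
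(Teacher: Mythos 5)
Your proposal is correct and matches the paper's (omitted) argument: the paper states that the entries of $H(y)^n$ are guessed from the first few powers and then "verified by induction," which is precisely your induction on $n$ reducing to the two Pascal-rule identities $p(n)+y\,s(n)=p(n+1)$ and $p(n)+(1+y)\,s(n)=s(n+1)$. Your base case, the coefficient extractions, and the consistency check for the second row all check out, and your remark about the notational clash between the lemma's $s(n)$ and the earlier $s_k(n)$ is a fair point.
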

%\begin{proof} Proof is by induction. For $n=1$ we have $p(1)=s(1)=1$ and $p(2)=1+y$. To finish to proof we need to show that \begin{align*}
%\begin{bmatrix}
%p(n) & s(n) \\ ys(n) & p(n+1)
%\end{bmatrix}\begin{bmatrix}
%p(1) & s(1) \\ ys(1) & p(2) \end{bmatrix}&=\begin{bmatrix}
%p(n)+ys(n) & p(n)+(1+y)s(n) \\ ys(n)+yp(n+1) &ys(n)+(1+y) p(n+1)
%\end{bmatrix}\\&=\begin{bmatrix}
%p(n+1) & s(n+1) \\ ys(n+1) & p(n+2)
%\end{bmatrix}.\end{align*}
%
%We have \begin{align*}
%p(n-1)p(1)+s(n-1)ys(1)&= p(n-1)+ys(n-1)\\
%&=\displaystyle\sum\limits_{k=1}^{n}\binom{n+k-2}{n-k}y^{k-1}+\displaystyle\sum\limits_{k=1}^{n}\binom{n+k-1}{n-k}y^{k}\\
%&=1+\displaystyle\sum\limits_{k=1}^{n-1}\binom{n+k-1}{n-k-1}y^{k}+\displaystyle\sum\limits_{k=1}^{n-1}\binom{n+k-1}{n-k}y^{k}+y^n\\
%&=1+\displaystyle\sum\limits_{k=2}^{n}\binom{n+k-1}{n-k+1}y^{k-1}+y^n\\
%&=\displaystyle\sum\limits_{k=1}^{n+1}\binom{n+k-1}{n-k+1}y^{k-1}\\
%&=p(n+1).
%\end{align*}
%
%The rest of proof can be easily done in a similar way but here we omit that part.
%\end{proof}
%
Lemma \ref{lm:matrix xy} allows us to simplify the expression (\ref{expression}) to have
\begin{align*}
\begin{bmatrix} 1& 1 \end{bmatrix} \begin{bmatrix}
1 & 1 \\ y & 1+y \end{bmatrix}^{n-2} \begin{bmatrix}
y \\ y^2 \end{bmatrix} x^n &= \begin{bmatrix} 1& 1 \end{bmatrix} \begin{bmatrix}
p(n-2) & s(n-2) \\ ys(n-2) & p(n-1) \end{bmatrix} \begin{bmatrix}
y \\ y^2 \end{bmatrix} x^n \\
&= \left(yp(n-2) + ys(n-2) + y^2s(n-2) + y^2p(n-1)\right)  x^n \\
&= \left(yp(n-1) + y\left(s(n-2) + yp(n-1)\right)\right)  x^n \\
&= p(n)x^ny \\
&=\displaystyle\sum\limits_{k=1}^{n}\binom{n+k-2}{n-k}y^{k} x^n 
\end{align*} 

By Theorem \ref{tm:matrix} we have $$d(n,k)=\binom{n+k-2}{n-k}.$$

Now we turn our attention to the number of all possible divisions, i.e.
we wish to determine the number $d(n)$. To to that, we again use matrix
$H(y)$ and Theorem \ref{tm:matrix}. By setting $y=1$ we have
$H(1)=\begin{bmatrix}
1 & 1 \\ 1 & 2 \end{bmatrix}=\begin{bmatrix}
F_1 & F_2 \\ F_2 & F_3 \end{bmatrix}$. Again, the following claim is easily
guessed and verified by induction.
\begin{lemma} \label{lm:matrix} $H(1)^n=\begin{bmatrix}
F_{2n-1} & F_{2n} \\ F_{2n} & F_{2n+1}
\end{bmatrix}$. \end{lemma}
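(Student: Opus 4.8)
The plan is to prove Lemma \ref{lm:matrix} by induction on $n$, which is the standard way to handle powers of a $2\times 2$ matrix once the answer is known. For the base case $n=1$ we simply observe $H(1)=\begin{bmatrix} 1 & 1 \\ 1 & 2 \end{bmatrix}=\begin{bmatrix} F_1 & F_2 \\ F_2 & F_3 \end{bmatrix}$, using $F_1=F_2=1$ and $F_3=2$, which is exactly the computation performed just before the statement. (One may also check $n=0$ against the identity matrix, since $F_{-1}=1$, $F_0=0$, $F_1=1$, if one prefers to start the induction there.)

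For the inductive step, I would assume $H(1)^n=\begin{bmatrix} F_{2n-1} & F_{2n} \\ F_{2n} & F_{2n+1} \end{bmatrix}$ and compute $H(1)^{n+1}=H(1)^n\cdot H(1)$. Multiplying out gives
\begin{align*}
H(1)^{n+1}&=\begin{bmatrix} F_{2n-1} & F_{2n} \\ F_{2n} & F_{2n+1} \end{bmatrix}\begin{bmatrix} 1 & 1 \\ 1 & 2 \end{bmatrix}\\
&=\begin{bmatrix} F_{2n-1}+F_{2n} & F_{2n-1}+2F_{2n} \\ F_{2n}+F_{2n+1} & F_{2n}+2F_{2n+1} \end{bmatrix}.
\end{align*}
Now I would simplify each entry using the Fibonacci recurrence $F_{m}+F_{m+1}=F_{m+2}$: the top-left entry is $F_{2n+1}$; the bottom-left (and top-right, by symmetry) entry is $F_{2n-1}+2F_{2n}=(F_{2n-1}+F_{2n})+F_{2n}=F_{2n+1}+F_{2n}=F_{2n+2}$; and the bottom-right entry is $F_{2n}+2F_{2n+1}=(F_{2n}+F_{2n+1})+F_{2n+1}=F_{2n+2}+F_{2n+1}=F_{2n+3}$. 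Hence $H(1)^{n+1}=\begin{bmatrix} F_{2n+1} & F_{2n+2} \\ F_{2n+2} & F_{2n+3} \end{bmatrix}$, which is the claimed form with $n$ replaced by $n+1$, completing the induction.

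Honestly there is no real obstacle here; the argument is a routine two-line matrix multiplication followed by repeated use of the Fibonacci recurrence. If one wanted a slicker, computation-free derivation, one could instead note that $H(1)=A^2$ where $A=\begin{bmatrix} 0 & 1 \\ 1 & 1 \end{bmatrix}$ is the classical Fibonacci $Q$-matrix satisfying $A^m=\begin{bmatrix} F_{m-1} & F_m \\ F_m & F_{m+1} \end{bmatrix}$, so that $H(1)^n=A^{2n}$ yields the result immediately; but since the paper's style is to "guess and verify by induction," I would present the direct inductive proof above. Alternatively, one can observe that $H(1)^n$ is determined by the same linear recurrence $M(n+1)=3M(n)-M(n-1)$ (satisfied entrywise since $H(1)^2=3H(1)-I$, as $H(1)$ has trace $3$ and determinant $1$), together with the two initial matrices, and $F_{2n\pm 1}$, $F_{2n}$ all satisfy this recurrence, so matching the two base cases suffices.
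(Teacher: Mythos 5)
Your proof is correct and matches the paper's approach exactly: the paper states that the claim is "easily guessed and verified by induction," and your base case plus the inductive step via the Fibonacci recurrence is precisely that verification. The alternative observations (the $Q$-matrix factorization and the recurrence $H(1)^{n+1}=3H(1)^n-H(1)^{n-1}$) are nice but not needed.
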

%\begin{proof}
%We prove this lemma by induction. For $n=1$ statement is true. By using identity $F_{n+k}=F_{n+1}F_{k}+F_{n}F_{k-1}$ we have \begin{align*}
%\begin{bmatrix}
%F_{2n-3} & F_{2n-2} \\ F_{2n-2} & F_{2n-1}
%\end{bmatrix}\begin{bmatrix}
%F_1 & F_2 \\ F_2 & F_3
%\end{bmatrix}&=\begin{bmatrix}F_{2n-3}F_1+F_{2n-2}F_2 & F_{2n-3}F_2+F_{2n-2}F_3 \\ F_{2n-2}F_1+F_{2n-1}F_2 & F_{2n-2}F_2+F_{2n-1}F_3\end{bmatrix}\\&=\begin{bmatrix}F_{2n-1} & F_{2n} \\ F_{2n} & F_{2n+1}\end{bmatrix}.\end{align*}
%\end{proof}

Finally, by Lemma \ref{lm:matrix} we can simplify expression
(\ref{expression}) to have 
\begin{align*}  \begin{bmatrix} 1& 1 \end{bmatrix} \begin{bmatrix} F_{2n-5} & F_{2n-4} \\ F_{2n-4} & F_{2n-3} \end{bmatrix} \begin{bmatrix} 1 \\ 1 \end{bmatrix} &= \begin{bmatrix} F_{2n-5}+F_{2n-4}& F_{2n-4}+F_{2n-3} \end{bmatrix}\begin{bmatrix} 1 \\ 1 \end{bmatrix} \\ 
&=\begin{bmatrix} F_{2n-3}& F_{2n-2}\end{bmatrix}\begin{bmatrix} 1 \\ 1 \end{bmatrix} \\
&= F_{2n-1}. \end{align*}

By Theorem \ref{tm:matrix} we have $d(n)=F_{2n-1}$.

\subsection{Chocolate bars}

Transfer matrices can be also used to obtain the sequence $r_{k}(n)$
denoting the number of ways to divide rectangular strip $2\times n$ into
$k$ parts. In this case we do not add square by square, but column by column.
So, let $T$ denote a division of a strip where squares in the last column
are in the same part and $S$ a division where squares in the last column are
in different parts. 

For $n=1$ we have the same case as $n=2$ in a honeycomb strip, so this case
is represented by a vector
\begin{equation*}
\overrightarrow{q_1}=x\begin{bmatrix} y \\ y^2 \end{bmatrix}.
\end{equation*}

Similar as in the honeycomb case, if we start with a division of type $T$
and we wish to obtain another division of type $T$, we can do that either by
appending two new squares to the same part with the squares of the last
column or we can let two new squares form a new part. Hence, the
corresponding entry in the transfer matrix is $1+y$. By doing similar
analysis for other cases, we obtain the transfer matrix
\begin{equation*}
Q(y)=\begin{bmatrix}
1+y & 2+y \\ y(2+y) & (1+y)^2 
\end{bmatrix}.
\end{equation*}
Again, $y$ is a formal variable keeping track of the number of pieces.
So, for a strip $2\times n$, the coefficient by $x^ny^k$ in the expression
$$\begin{bmatrix}
1 & 1 
\end{bmatrix}\begin{bmatrix}
1+y & 2+y \\ y(2+y) & (1+y)^2 
\end{bmatrix}^{n-1}\begin{bmatrix}
y \\ y^2
\end{bmatrix}x^n$$
represents the number of ways to divide rectangular strip $2\times n$ into
exactly $k$ parts.

We conclude this section by mentioning that in both cases we could have
obtained the asymptotic behavior of numbers $d(n)$ and $r(n)$ by computing
the leading eigenvalue of the corresponding transfer matrix.

\section{Concluding remarks}
\noindent
In this paper we have employed two different methods to count 
divisions of narrow strips of squares and hexagons, respectively, into a 
given number of pieces, when cutting is allowed only along the edges of
basic polygons. We have obtained several triangular integer arrays and 
determined formulas for their entries. Despite similar settings, the two
problems behave in different ways: for honeycomb strips the entries of the
enumerating triangles are given as binomial coefficients with parameters
dependent on the strip length and the number of pieces, while for chocolate
bars no closed-form expression has been obtained. We were able to show, though,
that the entries in columns satisfy
convolution-type recurrences with coefficients forming alternating rows of
Pascal triangle.

Both problems were then addressed by using the transfer-matrix formalism.
The original results for the total number of divisions were re-derived 
in a more compact way, demonstrating thus the power of transfer-matrix method.
However, we found the approach unsuitable for refining the aggregate results,
for establishing the polynomial nature of columns and for obtaining
closed-form solutions in the rectangular case. Nevertheless, we believe that 
the transfer matrices would prove useful in treating a number of similar
problems, as indicated by our experiments with wider strips in both square 
and hexagonal lattices and with narrow strips in the triangular lattice.

\section*{Acknowledgement}
Partial support of Slovenian ARRS (Grant no. J1-3002)
is gratefully acknowledged by T. Do\v{s}li\'c.

\nocite{*}
\bibliographystyle{amsplain}
\bibliography{}

\end{document}